\newcommand{\Q}{\ensuremath{\mathbb{Q}}}
\newcommand{\Z}{\ensuremath{\mathbb{Z}}}
\DeclareMathOperator{\card}{card}
\DeclareMathOperator{\im}{im}
\newtheorem{teo}{Theorem}
\newtheorem{teorema}{Theorem}[section]
\newtheorem{lema}[teorema]{Lemma} %
\newtheorem{cor}[teorema]{Corollary} 
\newtheorem{prop}[teorema]{Proposition}
\theoremstyle{definition}
\theoremstyle{remark} 
\newtheorem{obs}[teorema]{Observation}
\begin{document}


\title[The $R_{\infty}$ property for nilpotent quotients of GSBS groups]{The $R_{\infty}$ property for nilpotent quotients of Generalized Solvable Baumslag-Solitar groups}


\author{Wagner C. Sgobbi} 
\address[Wagner C. Sgobbi]{Departamento de Matem\'atica - Instituto de Matem\'atica e Estat\'istica - Universidade de S\~ao Paulo, Caixa Postal 66.281 - CEP 05314-970, S\~ao Paulo - SP, Brazil}
\thanks{Wagner C. Sgobbi was supported by grants 2017/21208-0 and 2019/03150-0, São Paulo Research Foundation (FAPESP) }
\email{wagnersgobbi@dm.ufscar.br }

\author{ Dalton C. Silva} 
\address[Dalton C. Silva]{Instituto Federal de Educa\c c\~ao, Ci\^encia e Tecnologia de S\~ao Paulo, 11665-071, Caraguatatuba, SP, Brazil}
\email{dalton.couto@ifsp.edu.br }

\author{Daniel Vendr\'uscolo}
 \address[Daniel Vendr\'uscolo]{ Departamento de Matem\'atica,
Universidade Federal de S\~ao Carlos, Rodovia Washington Luiz, Km
235, S\~ao Carlos 13565-905, Brazil} 
\thanks{Daniel Vendr\'uscolo was partially supported by grant 2016/24707-4 São Paulo Research Foundation (FAPESP)  }
\email{daniel@dm.ufscar.br}

\date{May 2022}

\keywords{$R_{\infty}$ Property, Generalized Solvable Baumslag-Solitar groups }
\subjclass[2000]{Primary: 20E36, 20E45 Secundary: 20F65 }

\begin{abstract}
    We say a group $G$ has property $R_\infty$ if the number $R(\varphi)$ of twisted conjugacy classes is infinite for every automorphism $\varphi$ of $G$. For such groups, the $R_\infty$-nilpotency degree is the least integer $c$ such that $G/\gamma_{c+1}(G)$ has property $R_\infty$. In this work, we compute the $R_\infty$-nilpotency degree of all Generalized Solvable Baumslag-Solitar groups $\Gamma_n$. Moreover, we compute the lower central series of $\Gamma_n$, write the nilpotent quotients $\Gamma_{n,c}=\Gamma_n/\gamma_{c+1}(\Gamma_n)$ as semidirect products of finitely generated abelian groups and classify which integer invertible matrices can be extended to automorphisms of $\Gamma_{n,c}$.
\end{abstract}

\maketitle

\section{Introduction}

The main task of our paper is to show the following:

\begin{teo}[Theorem 4.4]
Let an integer $n \geq 1$ have a prime decomposition with at least two prime numbers involved. Then the $R_\infty$-nilpotency degree of any Generalized Solvable Baumslag-Solitar group $\Gamma_n$ is infinite.
\end{teo}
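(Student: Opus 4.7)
The goal is to show that for every nilpotency class $c \geq 1$, the quotient $\Gamma_{n,c}$ does not have property $R_\infty$. Equivalently, I must exhibit, for each $c$, an automorphism $\varphi = \varphi_c$ of $\Gamma_{n,c}$ with $R(\varphi_c) < \infty$; producing such witnesses for every $c$ is exactly the statement that the $R_\infty$-nilpotency degree is infinite.

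My plan is to make direct use of the three structural inputs developed earlier in the paper: the explicit lower central series of $\Gamma_n$, the presentation of $\Gamma_{n,c}$ as a semidirect product of finitely generated abelian groups, and the classification of integer invertible matrices that extend to automorphisms of $\Gamma_{n,c}$. The hypothesis that $n$ has at least two distinct prime factors allows me to pick a nontrivial coprime factorization $n = n_1 n_2$ with $n_1, n_2 > 1$, obtained by grouping the prime-power factors of $n$. The idea is to construct an admissible matrix $M_c$ out of this factorization—acting, roughly speaking, by distinct scalars (built from $n_1$ and $n_2$) on the different prime components of the torsion summands appearing in the semidirect decomposition. This multi-prime flexibility is absent in the prime-power case, and I expect it to place enough admissible matrices on the classification list to supply a suitable $\varphi_c$ for every $c$.

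Once $\varphi_c$ is in hand, I will compute its Reidemeister number via the standard product formula on the lower central series, $R(\varphi_c) = \prod_i R(\bar\varphi_i)$, where $\bar\varphi_i$ denotes the induced automorphism on the graded piece $\gamma_i(\Gamma_{n,c})/\gamma_{i+1}(\Gamma_{n,c})$. Because each graded piece is finitely generated abelian, the factor $R(\bar\varphi_i)$ is $|\det(\bar\varphi_i - I)|$ whenever that determinant is nonzero on the free rank, and is infinite otherwise. If $M_c$ is chosen so that $1$ is not an eigenvalue of $\bar\varphi_i$ on the free rank of any graded piece, then every factor in the product is finite and $R(\varphi_c) < \infty$, as required.

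The main obstacle I anticipate is satisfying two competing constraints simultaneously, uniformly in $c$: $M_c$ must (i) remain admissible in the matrix classification of the earlier section, and (ii) avoid $1$ as an eigenvalue on every free part of $\gamma_i/\gamma_{i+1}$. Richer automorphisms tend to have eigenvalues clustering at $1$, so the balance is delicate. I expect to resolve this by writing down $M_c$ explicitly from the decomposition $n = n_1 n_2$, reading off its effect on each graded piece from the lower-central-series computation, and verifying both admissibility and the eigenvalue condition by a direct calculation; the coprimality of $n_1$ and $n_2$ is precisely the ingredient that prevents the eigenvalues from collapsing to $1$, and is what breaks the argument in the prime-power case.
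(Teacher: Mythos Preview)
Your outline contains a structural misconception and is missing the key construction. First, by Proposition~\ref{nqlcs} the graded quotients $\gamma_k(\Gamma_{n,c})/\gamma_{k+1}(\Gamma_{n,c})$ for $k\ge 2$ are finite cyclic (of order dividing $m$), so they carry no free rank; the only free piece is the $\Z^r$ in the abelianization. Consequently there is no ``delicate balance'' of eigenvalue conditions across many graded pieces: Proposition~\ref{nq5} already reduces $R(\varphi)<\infty$ to the single condition $\det(\overline\varphi-Id)\ne 0$ on $\Z^r$, and your product-formula apparatus is unnecessary.

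The real gap is the construction of an admissible matrix. By Proposition~\ref{nq6}, admissibility of $M=(a_{ij})\in GL_r(\Z)$ means
\[
p_1^{a_{1i}y_1}\cdots p_r^{a_{ri}y_r}\equiv p_i^{y_i}\pmod{m^c}\qquad(1\le i\le r),
\]
where $m=\gcd(p_1^{y_1}-1,\dots,p_r^{y_r}-1)$. These are constraints in $(\Z/m^c\Z)^\times$ and depend only on $m$, not on any multiplicative splitting of $n$; your proposed factorization $n=n_1n_2$ and ``distinct scalars on prime components'' do not engage with them, and the claim that coprimality of $n_1,n_2$ ``prevents eigenvalues from collapsing to $1$'' has no evident content here. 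The paper's actual idea is to take $M=m^{c-1}N+Id$: then each congruence becomes $u^{m^{c-1}}\equiv 1\pmod{m^c}$ for a unit $u\equiv 1\pmod m$, which holds by the elementary lemma preceding Theorem~\ref{nqteo}, while $\det(M-Id)=m^{r(c-1)}\det N\ne 0$ as soon as $\det N\ne 0$. The remaining work is to produce, for every $r\ge 2$, an integer matrix $N$ with $\det N=1$ and $\det(m^{c-1}N+Id)=1$; this is where $r\ge 2$ is genuinely used (for $r=1$ no such $N$ exists), and the paper writes down an explicit family $N_r$. None of this is captured by your plan.
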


Let us first define property $R_\infty$ and the $R_\infty$-nilpotency degree and then problematize. An automorphism $\varphi$ of a group $G$ gives rise to a ``twisted conjugacy'' equivalence relation on $G$ given by $x \sim_\varphi y \ \iff\ \exists\ z \in G:\ zx\varphi(z)^{-1}=y$. The number of equivalence classes (or Reidemeister classes, or twisted conjugacy classes) is denoted by $R(\varphi)$, and we say $G$ has property $R_\infty$ if $R(\varphi)$ is infinite for every automorphism $\varphi$ of $G$. This twisted conjugacy relation first appeared in a work of K. Reidemeister \cite{Reidemeister} and has many connections with other areas of mathematics, in particular to fixed point theory (see \cite{Jiang}). We refer the introduction of the paper \cite{DaciDekimpe2} to a good exhibition of those connections and a discussion of the historical context and development of $R_\infty$, including a list of families of groups with this property. The search of $R_\infty$-groups is still very active. For example, a first proof of $R_\infty$ for the pure Artin braid groups $P_n$, $n \geq 3$, was published in 2021 \cite{DGO} and, even more recently, an alternative proof was obtained in \cite{MS}.

For groups $G$ with property $R_\infty$, the $R_\infty$-nilpotency degree is the least integer $c$ such that the quotient $G/\gamma_{c+1}(G)$ has property $R_\infty$, where $\gamma_k(G)$ are the terms of the lower central series of $G$, that is, $\gamma_1(G)=G$ and $\gamma_{c+1}(G)=[\gamma_c(G),G]$, for any $c\geq 1$. If none of the quotients have $R_\infty$, we say the $R_\infty$-nilpotency degree of $G$ is infinite. This degree relates to the folowing problem: it is well known that if $G$ has a characteristic subgroup $N$ (for example, $N=\gamma_k(G)$, $k \geq 1$) such that $G/N$ has $R_\infty$, then $G$ has $R_\infty$. One can ask then whether the converse is true, that is, are there groups $G$ whose property $R_\infty$ is still valid for the quotients $G/\gamma_k(G)$? The answer to this question is positive and gives us the more specific motivation for our work, as we will discuss now. 

Knowing that the free groups $F_n$ have $R_\infty$ (since they are hyperbolic \cite{LevittLustig}), the authors K. Dekimpe and D. L. Gon\c calves studied property $R_\infty$ for infinitely generated free groups, free nilpotent groups and free solvable groups \cite{DG}. They generalized a result of \cite{Rom}, showing that the free nilpotent groups $F_{n,c}=F_n/\gamma_{c+1}(F_n)$, that is, nilpotent quotients of free groups $F_n$, have property $R_\infty$ if and only if $c \geq 2n$. So, the $R_\infty$-nilpotency degree of free groups $F_n$ is $2n$. This fact is actually what motivated the definition of $R_\infty$-nilpotency degree above, which was first given in \cite{DaciDekimpe2}. In that work, also motivated by the fact that fundamental groups of hyperbolic surfaces have $R_\infty$ (again by \cite{LevittLustig}), the same authors studied nilpotent quotients of such surface groups, showing that the $R_\infty$-nilpotency degree $c$ of an orientable surface $S_g$ of genus $g\geq 2$ is $c=4$ and that, for $N_g$ a connected sum of $g\geq 3$ projective planes, $c=2(g-1)$. It is worth noticing that a consequence of their research was the discovery of new examples of nilmanifolds on which every self-homotopy equivalence can be deformed into a fixed point free map.

On the non-hyperbolic side, an important example of $R_\infty$ groups are the Baumslag-Solitar groups
\[
BS(m,n)=\left<a,t\ |\ ta^mt^{-1}=a^n \right>,
\]
for $m,n$ integers (see \cite{DaciFel}). These are important examples of combinatorial and geometric group theory. In \cite{DaciDekimpe}, K. Dekimpe and D. L. Gon\c calves determined the $R_\infty$-nilpotency degree of $BS(m,n)$:

\begin{teo}[Theorem 5.4 in \cite{DaciDekimpe}] Let $0 < m \leq |n|$ with $m \neq n$ and take $d=gcd(m,n)$. Let $p$ denote the largest integer such that $2^p|2\frac{m}{d} + 2$. Then, the $R_\infty$-nilpotency degree $r$ of $BS(m,n)$ is given by

\begin{itemize}
    \item In case $n < 0$ and $n \neq -m$, then $r = 2$.
\item In case $n = -m$ then $r = \infty$.
\item In case $n = m$ then $r = \infty$.
\item In case $n - m = d$, then $r = \infty$.
\item In case $n - m = 2d$, then $2 \leq r \leq p + 2$.
\item In case $n - m \geq 3d$, then $r = 2$.
\end{itemize}
\end{teo}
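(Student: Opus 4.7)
The plan is to analyze, for each integer $c \geq 1$, the nilpotent quotient $N_c := BS(m,n)/\gamma_{c+1}(BS(m,n))$ explicitly as a semidirect product $A_c \rtimes \langle t \rangle$, classify its automorphisms, and then compute or bound the Reidemeister numbers $R(\varphi)$.

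The first step is to compute the lower central series. The defining relation $t a^m t^{-1} = a^n$ rewrites as $[t, a^m] = a^{n-m}$, so $a^{n-m} \in \gamma_2(BS(m,n))$. Iterating the commutator expansions (e.g. computing $[t, a^{n-m}]$ and $[t, [t, a^m]]$ modulo $\gamma_{c+1}$) gives a recursive description of the image of $a$ in each $N_c$ and shows that the image $A_c$ of the normal closure of $a$ is a finitely generated abelian group whose torsion is controlled by expressions involving $n-m$ and powers of $m/d$ (or collapses entirely, or remains $\mathbb{Z}$-free, in the degenerate cases). The subgroup $A_c$ is characteristic in $N_c$, so every automorphism $\varphi$ preserves it and induces $\bar\varphi = \pm \mathrm{id}$ on $N_c/A_c \cong \mathbb{Z}$.

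The second step is to apply a standard additivity principle for semidirect products: $R(\varphi) = \infty$ whenever $R(\bar\varphi) = \infty$, and since $R(\mathrm{id}_{\mathbb{Z}}) = \infty$, the decisive case is $\bar\varphi = -\mathrm{id}$. There, $R(\varphi)$ decomposes as a finite sum of Reidemeister counts of the map $\psi := \varphi|_{A_c}$ twisted by fixed-point representatives of $\bar\varphi$, each of which for abelian $A_c$ equals $|\mathrm{coker}(\psi - \mathrm{id})|$ when finite and is infinite otherwise. Consequently, $N_c$ fails to have $R_\infty$ precisely when one can build a $\varphi$ with $\bar\varphi = -\mathrm{id}$ whose restriction $\psi$ to $A_c$ satisfies $|\mathrm{coker}(\psi - \mathrm{id})| < \infty$. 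The case analysis then proceeds by trying to construct such a $\psi$: for $n-m \geq 3d$ and for $n < 0,\ n \neq -m$, one shows that on the abelianization $\mathbb{Z}/(n-m) \oplus \mathbb{Z}$ the inversion $a \mapsto a^{-1},\ t \mapsto t^{-1}$ has finite Reidemeister number (ruling out $r = 1$), but that no such extension survives to $N_2$; for $n = m$, $n = -m$ and $n - m = d$ one exhibits, at every $c$, explicit involution-type automorphisms of $N_c$ with finite Reidemeister number, giving $r = \infty$.

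The main obstacle is the case $n - m = 2d$, which produces only the bracket $2 \leq r \leq p+2$ with $p$ the 2-adic valuation of $2m/d + 2$. The subtlety is that the multiplication-by-$n/d$ action of $t$ on the torsion of $A_c$ creates, in successive nilpotent quotients, 2-torsion whose structure depends delicately on the 2-adic arithmetic of $m/d$. The lower bound $r \geq 2$ follows from the abelianization argument above, whereas the upper bound $r \leq p+2$ requires constructing, for each $c \geq p+2$, an automorphism of $N_c$ whose restriction $\psi$ to $A_c$ has $\psi - \mathrm{id}$ of finite cokernel, and this construction hinges on the fact that $2^{p+1}$ does not divide $2m/d + 2$, which provides just enough room for a suitable inversion-type automorphism to extend past the 2-torsion obstructions. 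Pinning down the exact form of this automorphism and verifying the cokernel computation is what I would expect to be the technical heart of the argument, and explains why even the published result can only give a bracket rather than an exact value.
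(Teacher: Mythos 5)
This statement is not proved in the paper at all: it is Theorem 5.4 of the cited work of Dekimpe and Gon\c calves \cite{DaciDekimpe}, quoted verbatim as motivation, so there is no in-paper proof to compare your argument with. Judged on its own terms, your proposal correctly identifies the strategy that the literature (and, for $\Gamma_n$, the present paper in its Sections 2--4) actually uses: compute the lower central series, realize each nilpotent quotient $N_c$ as an extension of a free abelian group by a torsion abelian group, use the fact that the torsion subgroup is characteristic to reduce to the induced matrix on the free abelian quotient, and decide property $R_\infty$ by whether an automorphism inducing a matrix without eigenvalue $1$ lifts to $N_c$.

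As a proof, however, it has a genuine gap: every quantitative step is deferred. You never determine the orders of the quotients $\gamma_i/\gamma_{i+1}$ (the constant $2\frac{m}{d}+2$, which equals $\frac{m+n}{d}$ when $n-m=2d$, is precisely the $2$-adic bottleneck in that growth, and without computing it the fifth case cannot even be stated); you never verify which assignments $a \mapsto a^{\epsilon}$, $t \mapsto t^{-1}a^{j}$ actually define automorphisms of $N_c$ (the analogue of the equations $(M,c,i)$ in the present paper, and the place where the six cases of the theorem separate); and you never carry out the cokernel computation that distinguishes $r=2$ from $r=\infty$. One asserted step is also false as stated: for $n=m$ the abelianization of $BS(m,m)$ is $\Z^2$, so $A_1$ is an infinite cyclic direct factor that is not characteristic, $N_c/\tau N_c$ has rank $2$, and the induced map is a matrix in $GL_2(\Z)$ rather than $\pm\mathrm{id}$ on $\Z$; the cases $n=\pm m$ therefore need a separate rank-$2$ analysis instead of being folded into the rank-$1$ dichotomy. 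In short, the proposal is a plausible road map that matches the known approach, but it defers exactly the computations at which the case distinctions of the theorem are decided, so it cannot be accepted as a proof.
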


In the particular case of the solvable Baumslag-Solitar groups $BS(1,n)$, $n \geq 2$, we have $r=\infty$ for $n=2$, $r=4$ for $n=3$ and $r=2$ for $n \geq 4$. The investigation of the $R_\infty$-nilpotency degree for generalizations of these groups is therefore natural. The authors comment about the family of Generalized Baumslag-Solitar groups (or GBS groups), which are $R_\infty$ groups. In this paper, we add one more important family to that discussion, namely, the Generalized Solvable Baumslag-Solitar groups $\Gamma_n$. Let $n\geq 2$ be an integer with prime decomposition $n={p_1}^{y_1}...{p_r}^{y_r}$, the $p_i$ being pairwise distinct and $y_i>0$. We consider the Generalized Solvable Baumslag-Solitar group
\[
\Gamma_n=\left<a,t_1,...,t_r\ |\ t_it_j=t_jt_i,\ i\neq j,\ t_ia{t_i}^{-1}=a^{{p_i}^{y_i}},\ i=1,...,r \right>.
\]

In particular, if $n=p^y$ involves only one prime number, then $\Gamma_n=BS(1,n)$ is a solvable Baumslag-Solitar group. We will focus, therefore, in the case of two or more prime numbers in the decomposition of $n$.

These groups arise mainly in a geometric generalization of the solvable Baumslag-Solitar groups $BS(1,n)$. Consider the group $PSL_2(\mathbb{Z}[1/n])$, which
acts on the product of the hyperbolic space $H_2$ with the product of the Bruhat-Tits trees for $PSL_2(\mathbb{Q}_{p_i})$. The stabilizer of a point at infinity under this action is the upper
triangular subgroup of $PSL_2(\mathbb{Z}[1/n])$, which we are denoting by $\Gamma_n$ (see \cite{TabackWong}). In this paper, however, dealing with the above presentation was enough to estabilish our results. The $\Gamma_n$ are also known to be solvable and metabelian groups, fitting in a short exact sequence of the form

\[
  1  \to \mathbb{Z}\left[\frac{1}{n}\right] \to \Gamma_n \to \mathbb{Z}^r \to 1.
\] More details about $\Gamma_n$ can be seen in the paper \cite{TabackWong}, where J. Taback and P. Wong show property for $\Gamma_n$ and for any group quasi-isometric to it.

We started dealing with Baumslag-Solitar groups in the first author's doctoral dissertation. There, one realized that elementary techniques involving the BNS invariant $\Sigma^1$ of the groups $BS(1,n)$ could also be applied to $\Gamma_n$, leading to an elementary proof of $R_\infty$ to these groups. One can then ask whether other techniques - such as the investigation of nilpotent quotients of $BS(m,n)$ in \cite{DaciDekimpe} - can also be adapted to $\Gamma_n$. The main question was whether $\Gamma_n$ would behave more like $BS(1,2)$ (with infinite $R_\infty$-nilpotency degree) or like $BS(1,n)$, $n\geq 3$ (finite degree cases). As Theorem 1 shows, the groups $\Gamma_n$ fit in the infinite degree case. We believe this happens because of the large automorphism groups of its nilpotent quotients.

We will denote the nilpotent quotients of $\Gamma_n$ by
\[
\Gamma_{n,c}=\frac{\Gamma_n}{\gamma_{c+1}(\Gamma_n)}
\] for any $c \geq 1$, where $\gamma_{c+1}$ is the $(c+1)^{th}$ term of the lower central series of $\Gamma_n$. We know that $\Gamma_{n,c}$ is a nilpotent group with nilpotency class $\leq c$, for $\gamma_{c+1}(\Gamma_{n,c})=\gamma_{c+1}(\frac{\Gamma_n}{\gamma_{c+1}(\Gamma_n)})=\frac{\gamma_{c+1}(\Gamma_n)}{\gamma_{c+1}(\Gamma_n)}=1$. In this work, the torsion subgroup of a nilpotent group $G$ will be denoted by $\tau G=\{g \in G\ |\ g^n=1\ \text{for some}\ n \geq 1\}$.

This work is divided as follows: in section 2, we compute the torsion subgroup and the lower central series of $\Gamma_{n,c}$. Then, in section 3, we use these computations to create an useful isomorphism between $\Gamma_{n,c}$ and a semidirect product of the form $G_{n,c}= \Z_{m^c} \rtimes \Z^r$, for some $m\geq 1$. Finally, in section 4, we classify which matrices in $GL_r(\mathbb{Z})$ can be extended to automorphisms of $\Gamma_{n,c}$ and use the classification to find specific automorphisms with finite Reidemeister numbers in $\Gamma_{n,c}$.

\section{Torsion and lower central series of $\Gamma_{n,c}$}

We begin by showing the following

\begin{lema}\label{nq1}
Let $m=\gcd(p_1^{y_1}-1,...,p_r^{y_r}-1)$. Then $a^{m^k} \in \gamma_{k+1}(\Gamma_n)$ for all $k \geq 1$.
\end{lema}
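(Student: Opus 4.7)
The plan is to prove the statement by induction on $k$, exploiting the defining relation $t_i a t_i^{-1} = a^{p_i^{y_i}}$ together with a Bézout argument on the exponents.

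For the base case $k=1$, I would observe that from the relation $t_i a t_i^{-1} = a^{p_i^{y_i}}$ one obtains
\[
[t_i,a] \;=\; t_i a t_i^{-1} a^{-1} \;=\; a^{p_i^{y_i}-1} \;\in\; \gamma_2(\Gamma_n),
\]
for every $i=1,\ldots,r$. Since $m=\gcd(p_1^{y_1}-1,\ldots,p_r^{y_r}-1)$, Bézout's identity gives integers $c_1,\ldots,c_r$ with $m=\sum_i c_i(p_i^{y_i}-1)$, hence $a^m$ is a product of integer powers of the $a^{p_i^{y_i}-1}$, placing it in $\gamma_2(\Gamma_n)$.

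For the inductive step, assume $a^{m^k}\in\gamma_{k+1}(\Gamma_n)$. The key computation is
\[
[a^{m^k},t_i] \;=\; a^{m^k}\,t_i a^{-m^k} t_i^{-1} \;=\; a^{m^k}\cdot a^{-m^k p_i^{y_i}} \;=\; a^{-m^k(p_i^{y_i}-1)},
\]
where I used that $t_i a^{-m^k} t_i^{-1}=a^{-m^k p_i^{y_i}}$ follows by iterating the defining relation. Since $a^{m^k}\in\gamma_{k+1}(\Gamma_n)$ and $t_i\in\gamma_1(\Gamma_n)$, this commutator lies in $\gamma_{k+2}(\Gamma_n)$, and therefore $a^{m^k(p_i^{y_i}-1)}\in\gamma_{k+2}(\Gamma_n)$ for every $i$. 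Applying Bézout once more, write $m^{k+1}=m^k\cdot m=\sum_i c_i\, m^k(p_i^{y_i}-1)$, so that $a^{m^{k+1}}$ is a product of integer powers of the elements $a^{m^k(p_i^{y_i}-1)}$ and hence belongs to $\gamma_{k+2}(\Gamma_n)$, completing the induction.

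The argument is essentially bookkeeping, so I do not expect a serious obstacle; the only point demanding a bit of care is verifying the commutator formula $[a^{m^k},t_i]=a^{-m^k(p_i^{y_i}-1)}$ from the presentation and making explicit the two uses of Bézout (one to pass from the individual exponents $p_i^{y_i}-1$ to their gcd $m$ in the base case, and the same identity multiplied by $m^k$ in the inductive step). No further structural input about $\Gamma_n$ is needed.
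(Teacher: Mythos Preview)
Your proof is correct and follows essentially the same route as the paper: induction on $k$, computing the commutator of $t_i$ with $a^{m^k}$ via the defining relation to obtain $a^{\pm m^k(p_i^{y_i}-1)}\in\gamma_{k+2}(\Gamma_n)$, and then applying B\'ezout to reach $a^{m^{k+1}}$. The only cosmetic difference is that the paper uses $[t_i,a^{m^k}]$ while you use $[a^{m^k},t_i]$, which merely flips the sign of the exponent.
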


\begin{proof}
Induction on $k$. First, $k=1$. By using the group relations, note that, for any $1 \leq i \leq r$, $a^{p_i^{y_i}-1}=t_iat_i^{-1}a^{-1}=[t_i,a] \in \gamma_2(\Gamma_n)$. Since this is true for any $i$ and $m$ is an integer combination of the $p_i^{y_i}-1$, we have $a^m \in \gamma_2(\Gamma_n)$. Now, suppose the lemma is true for some $k \geq 1$. Then
\[
a^{(p_i^{y_i}-1)m^k}=a^{p_i^{y_i}m^k}a^{-m^k}=t_ia^{m^k}t_i^{-1}a^{-m^k}=[t_i,a^{m^k}] \in \gamma_{k+2}(\Gamma_n).
\] Again, since this is true for any $i$ and $m$ is an integer combination of the $p_i^{y_i}-1$, we have $a^{mm^k}=a^{m^{k+1}} \in \gamma_{k+2}(\Gamma_n)$, as desired. This completes the proof.
\end{proof}

To compute the torsion of the groups $\Gamma_{n,c}$, we need the following standard results:

\begin{prop}[see \cite{Mikhailov}] \label{contascomcomut}
Let $k,m,n\geq 1$ and let $x,y,z\in G$ be elements of a group $G$ such that $x\in \gamma_k(G)$, $y\in \gamma_m(G)$ and $z\in \gamma_n(G)$. Then: 
\begin{itemize}
\item[a)] $xy=yx$ $\mod \gamma_{k+m}(G)$;
\item[b)] $[x,yz]=[x,y][x,z]$ $\mod \gamma_{k+m+n}(G)$;
\item[c)] $[xy,z]=[x,z][y,z]$ $\mod \gamma_{k+m+n}(G)$.
\end{itemize}
\end{prop}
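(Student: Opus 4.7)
The plan is to deduce all three congruences from the single structural fact that $[\gamma_k(G),\gamma_m(G)]\subseteq \gamma_{k+m}(G)$ for every $k,m\geq 1$. This is the classical commutator-subgroup inequality for the lower central series; I would take it as the working lemma and, if a separate proof were required, supply it by induction on $k+m$ starting from the definition $\gamma_{k+1}(G)=[\gamma_k(G),G]$ and using the Hall--Witt identity to control the triple commutators that appear in the inductive step.

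With this inclusion in hand, part (a) is automatic: $xyx^{-1}y^{-1}=[x,y]\in [\gamma_k(G),\gamma_m(G)]\subseteq \gamma_{k+m}(G)$, so $xy\equiv yx\pmod{\gamma_{k+m}(G)}$. For part (b) I would begin with the universal commutator identity
$$
[x,yz] \;=\; [x,y]\cdot y[x,z]y^{-1},
$$
which holds in any group by direct expansion, and then rewrite
$$
y[x,z]y^{-1} \;=\; [y,[x,z]]\cdot [x,z].
$$
Since $[x,z]\in \gamma_{k+n}(G)$ by the fundamental inclusion, the correction term $[y,[x,z]]$ lives in $[\gamma_m(G),\gamma_{k+n}(G)]\subseteq \gamma_{k+m+n}(G)$, and the normality of $\gamma_{k+m+n}(G)$ in $G$ lets me shift this error past $[x,z]$ without leaving the coset. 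Collecting terms yields $[x,yz]\equiv [x,y][x,z]\pmod{\gamma_{k+m+n}(G)}$. Part (c) is handled symmetrically, starting from $[xy,z]=x[y,z]x^{-1}\cdot[x,z]$ and the conjugation identity $x[y,z]x^{-1}=[x,[y,z]]\cdot [y,z]$, whose error term $[x,[y,z]]$ lies in $[\gamma_k(G),\gamma_{m+n}(G)]\subseteq \gamma_{k+m+n}(G)$.

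There is no genuine obstacle here beyond the fundamental inclusion itself: once $[\gamma_k(G),\gamma_m(G)]\subseteq \gamma_{k+m}(G)$ is granted, the proposition reduces to a short bookkeeping exercise combining the two universal commutator expansions with the identity $g h g^{-1}=[g,h]h$. The only point that requires a moment of care is that the correction commutators must be conjugated into position using normality of $\gamma_{k+m+n}(G)$, but this is routine, and I would present the whole statement as a clean corollary of the standard behaviour of commutators with respect to the lower central series.
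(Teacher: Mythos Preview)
Your argument is correct and entirely standard: the fundamental inclusion $[\gamma_k(G),\gamma_m(G)]\subseteq\gamma_{k+m}(G)$ immediately gives (a), and the universal identities $[x,yz]=[x,y]\,{}^{y}[x,z]$ and $[xy,z]={}^{x}[y,z]\,[x,z]$, combined with $ghg^{-1}=[g,h]h$ and normality of $\gamma_{k+m+n}(G)$, dispose of (b) and (c). The only small point you glossed over is that in (c) your computation first produces $[y,z][x,z]$ rather than $[x,z][y,z]$; the two agree modulo $\gamma_{k+m+2n}(G)\subseteq\gamma_{k+m+n}(G)$ by part (a), so this is harmless, but it is worth making explicit.

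As for the comparison: the paper does not supply its own proof of this proposition. It is quoted as a known fact with the reference \cite{Mikhailov}, so there is nothing to compare against. Your write-up is precisely the kind of short derivation one would expect to find behind such a citation.
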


\begin{prop}(\cite{MagKarSol}, Theorem 5.4) \label{generatorsofquotient}
If $G$ is finitely generated by elements $x_1,\dots ,x_r$ then, for any $k\geq 1$, $\gamma_k(G)/\gamma_{k+1}(G)$ is abelian and finitely generated by the cosets of the $k$-fold commutators $[x_{i_1},\dots ,x_{i_k}]$, where $1\leq i_j\leq r$.
\end{prop}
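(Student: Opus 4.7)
The plan is to prove both assertions by induction on $k$, invoking only the commutator calculus already collected in Proposition \ref{contascomcomut}, which itself can be derived by direct expansion from the Hall--Witt identity.

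For the \emph{abelianness} of $\gamma_k(G)/\gamma_{k+1}(G)$, I would first establish the classical auxiliary inclusion $[\gamma_i(G),\gamma_j(G)]\subseteq \gamma_{i+j}(G)$ by induction on $i+j$, using the three-subgroups lemma (a direct consequence of Hall--Witt). Specializing to $i=j=k$ yields $[\gamma_k(G),\gamma_k(G)]\subseteq \gamma_{2k}(G)\subseteq \gamma_{k+1}(G)$ for every $k\geq 1$ (since $2k\geq k+1$), so the quotient is abelian.

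For the \emph{generation} claim, I would induct on $k$. The base case $k=1$ is immediate: $\gamma_1(G)/\gamma_2(G)$ is the abelianization of $G$, hence is generated by the cosets of $x_1,\dots,x_r$. For the inductive step, recall $\gamma_{k+1}(G)=[\gamma_k(G),G]$, so as a subgroup $\gamma_{k+1}(G)$ is generated by basic commutators $[y,g]^{\pm 1}$ with $y\in\gamma_k(G)$ and $g\in G$. By the inductive hypothesis one may write $y=\bigl(\prod_\alpha [x_{i_1^\alpha},\dots,x_{i_k^\alpha}]^{\epsilon_\alpha}\bigr)\cdot z$ with $z\in\gamma_{k+1}(G)$, and $g=\prod_\beta x_{j_\beta}^{\delta_\beta}$. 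Applying Proposition \ref{contascomcomut}(b),(c) repeatedly, $[y,g]$ becomes congruent, modulo $\gamma_{k+2}(G)$, to a product of commutators of the form $[[x_{i_1},\dots,x_{i_k}],x_j]^{\pm 1}$, i.e.\ to a product of $(k+1)$-fold commutators of the given generators. The correction $z$ contributes $[z,g]\in[\gamma_{k+1}(G),G]=\gamma_{k+2}(G)$, which vanishes in $\gamma_{k+1}(G)/\gamma_{k+2}(G)$.

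The main technical hurdle I foresee is the bookkeeping in this inductive step: one has to handle inverses carefully via congruences of the shape $[x^{-1},y]\equiv [x,y]^{-1}$ (valid modulo the appropriate $\gamma_{i+j+1}(G)$) and make sure that each application of Proposition \ref{contascomcomut} is carried out at the correct filtration level, so that all ``error terms'' consistently land in $\gamma_{k+2}(G)$ and disappear in the quotient. Once this tabulation is done, the proposition follows as stated.
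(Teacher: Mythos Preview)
Your argument is correct and is essentially the standard proof one finds in the cited reference \cite{MagKarSol}. Note that the paper does not supply its own proof here---the proposition is simply quoted as Theorem~5.4 of Magnus--Karrass--Solitar---so there is nothing to compare against beyond confirming that your sketch matches the classical treatment, which it does: abelianness via $[\gamma_k,\gamma_k]\subset\gamma_{2k}\subset\gamma_{k+1}$, and generation by induction on $k$ using the bilinearity identities of Proposition~\ref{contascomcomut} to reduce $[y,g]$ modulo $\gamma_{k+2}(G)$ to a product of $(k{+}1)$-fold commutators in the generators. Your filtration bookkeeping is sound: each application of Proposition~\ref{contascomcomut}(b),(c) produces error terms in $\gamma_{2k+1}(G)$ or $\gamma_{k+2}(G)$, both of which lie in $\gamma_{k+2}(G)$ for $k\ge 1$, and the handling of inverses via $[x^{-1},y]\equiv[x,y]^{-1}$ modulo the next term is likewise valid at these depths.
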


By an easy recursive argument, one shows the following:

\begin{lema}\label{nq2}
Let $G$ be a nilpotent group of class $\leq c$ and denote $\gamma_i=\gamma_i(G)$. If the quotients $\gamma_2/\gamma_3,...,\gamma_c/\gamma_{c+1}$ are finite, then $\gamma_2$ is a torsion subgroup of $G$. \qed
\end{lema}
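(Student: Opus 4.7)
The plan is to argue by reverse induction on the index $i$, showing that each term $\gamma_i$ (for $i$ running from $c$ down to $2$) consists entirely of torsion elements. The hypothesis that $G$ has nilpotency class at most $c$ means $\gamma_{c+1}=1$, which anchors the induction, and the finiteness of the successive quotients $\gamma_i/\gamma_{i+1}$ will let us lift torsion from one step to the next.

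For the base case $i=c$, the quotient $\gamma_c/\gamma_{c+1}=\gamma_c/1\cong \gamma_c$ is finite by assumption, so every element of $\gamma_c$ has finite order. For the inductive step, I would assume that $\gamma_{i+1}$ is a torsion subgroup for some $2\leq i\leq c-1$, and take an arbitrary $x\in \gamma_i$. Since $\gamma_i/\gamma_{i+1}$ is finite, there exists a positive integer $k$ with $x^{k}\in \gamma_{i+1}$. The inductive hypothesis then gives a positive integer $\ell$ with $(x^{k})^{\ell}=1$, so $x$ has finite order $k\ell$. Hence $\gamma_i$ is torsion, and iterating down to $i=2$ yields the conclusion.

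The argument is genuinely elementary: it uses only the definition of the lower central series, the fact that a finite index in a torsion subgroup forces torsion on the larger group, and the nilpotency hypothesis to start the induction. There is no real obstacle to overcome; the only subtlety worth noting is that ``torsion subgroup'' here just means a subgroup whose elements all have finite order (which is automatic for subgroups of nilpotent groups in any case), and this is exactly what the induction delivers.
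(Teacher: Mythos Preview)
Your argument is correct and is precisely the ``easy recursive argument'' the paper alludes to (the paper does not spell out a proof, merely indicating that one proceeds inductively and marking the lemma with a \qed). One tiny nitpick: you should say $x$ has order \emph{dividing} $k\ell$, not order $k\ell$, but this is cosmetic.
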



\begin{prop}
$\tau \Gamma_{n,c}= \left< \overline{a},\gamma_2(\Gamma_{n,c}) \right>$, where $\overline{a}=a\gamma_{c+1}=a\gamma_{c+1}(\Gamma_n) \in \Gamma_{n,c}$.
\end{prop}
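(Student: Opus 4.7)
The plan is to leverage the fact that $\langle \overline{a}\rangle$ is a finite normal subgroup of $\Gamma_{n,c}$ with quotient isomorphic to $\Z^r$. From the defining relations $\overline{t_i}\,\overline{a}\,\overline{t_i}^{-1}=\overline{a}^{p_i^{y_i}}$, conjugation by any $\overline{t_i}$ sends $\overline{a}$ to a power of itself, so $\langle \overline{a}\rangle$ is normal in $\Gamma_{n,c}$. The quotient $\Gamma_{n,c}/\langle \overline{a}\rangle$ is generated by the pairwise commuting images of $t_1,\ldots,t_r$, hence abelian; moreover no hidden relations among the $t_i$ appear, since the only relations of $\Gamma_n$ beyond $t_it_j=t_jt_i$ are $t_iat_i^{-1}=a^{p_i^{y_i}}$ (which become trivial after killing $\langle \overline{a}\rangle$), and the extra relations of $\Gamma_{n,c}$ come from $\gamma_{c+1}(\Gamma_n) \leq \gamma_2(\Gamma_n) \leq \langle a\rangle$ (also killed). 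So $\Gamma_{n,c}/\langle \overline{a}\rangle \cong \Z^r$, and consequently $\gamma_2(\Gamma_{n,c})=[\Gamma_{n,c},\Gamma_{n,c}] \leq \langle \overline{a}\rangle$, which makes $\langle \overline{a}, \gamma_2(\Gamma_{n,c})\rangle = \langle \overline{a}\rangle$.

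With this structural picture in hand, both containments follow quickly. For $\langle \overline{a}, \gamma_2(\Gamma_{n,c})\rangle \subseteq \tau\Gamma_{n,c}$, Lemma \ref{nq1} taken with $k=c$ gives $a^{m^c}\in \gamma_{c+1}(\Gamma_n)$, hence $\overline{a}^{m^c}=1$ in $\Gamma_{n,c}$ and $\langle\overline{a}\rangle$ is a finite cyclic group of order dividing $m^c$, every element of which is torsion. For the reverse containment $\tau\Gamma_{n,c} \subseteq \langle\overline{a}\rangle$, any torsion element of $\Gamma_{n,c}$ projects to a torsion element of $\Gamma_{n,c}/\langle\overline{a}\rangle \cong \Z^r$, which must be trivial, so the element itself lies in $\langle\overline{a}\rangle$.

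I anticipate no serious obstacle; the most delicate point is confirming the identification $\Gamma_{n,c}/\langle \overline{a}\rangle \cong \Z^r$, which hinges on the containment $\gamma_2(\Gamma_n) \leq \langle a\rangle$ (itself an immediate consequence of $\langle a\rangle$ being normal in $\Gamma_n$ with abelian quotient generated by the $t_i$). Lemma \ref{nq2} and Propositions \ref{contascomcomut}, \ref{generatorsofquotient} are not needed in this direct approach, though they provide an alternative route: one could instead rewrite all $k$-fold commutators of the generators as powers of $\overline{a}$ to show each quotient $\gamma_k/\gamma_{k+1}$ is finite, and then invoke Lemma \ref{nq2} to conclude that $\gamma_2(\Gamma_{n,c})$ is torsion.
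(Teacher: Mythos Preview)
Your strategy --- show $\langle\overline a\rangle$ is a finite normal subgroup with torsion-free quotient $\Z^r$ --- is more direct than the paper's, which proves each $\gamma_k/\gamma_{k+1}$ is finite via Propositions~\ref{contascomcomut} and~\ref{generatorsofquotient} and then invokes Lemma~\ref{nq2}. However, there is a genuine error in your justification: you assert that $\langle a\rangle$ is normal in $\Gamma_n$ and that $\gamma_2(\Gamma_n)\le\langle a\rangle$, and both claims are false. In $\Gamma_n$ one has $t_i^{-1}at_i\notin\langle a\rangle$ (in the copy of $\Z[1/n]$ sitting inside $\Gamma_n$ this element corresponds to $p_i^{-y_i}$, which is not an integer), so the normal closure of $a$ is strictly larger than $\langle a\rangle$ --- it is all of $\Z[1/n]$. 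The commutator $[t_i^{-1},a]=t_i^{-1}at_ia^{-1}$ therefore lies in $\gamma_2(\Gamma_n)$ but not in $\langle a\rangle$. The same issue undermines your one-line argument for normality of $\langle\overline a\rangle$ in $\Gamma_{n,c}$: knowing only $\overline t_i\,\overline a\,\overline t_i^{-1}\in\langle\overline a\rangle$ gives $\overline t_i\langle\overline a\rangle\overline t_i^{-1}\subseteq\langle\overline a\rangle$, not equality.

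The repair is easy and preserves your approach. First invoke Lemma~\ref{nq1} to get $\overline a^{m^c}=1$, so $\langle\overline a\rangle$ is finite; since $\gcd(p_i^{y_i},m^c)=1$, conjugation by $\overline t_i$ is then a surjective, hence bijective, endomorphism of the finite cyclic group $\langle\overline a\rangle$, and normality in $\Gamma_{n,c}$ follows. For the identification $\Gamma_{n,c}/\langle\overline a\rangle\cong\Z^r$, drop the false containment and instead observe that the surjection $\Gamma_n\to\Z^r$ from the short exact sequence kills both $\gamma_{c+1}(\Gamma_n)$ (the target is abelian) and $a$, hence factors through $\Gamma_{n,c}/\langle\overline a\rangle$; combined with your observation that this quotient is abelian on $r$ generators, the induced map is an isomorphism. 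With these fixes your argument is complete and indeed avoids Lemma~\ref{nq2} and Propositions~\ref{contascomcomut}, \ref{generatorsofquotient} entirely.
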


\begin{proof}
In the case $c=1$ we have $\Gamma_{n,1}$ is the abelianized group of $\Gamma_n$, so
\[
\Gamma_{n,1}=\left<\overline{a},\overline{t_1},...,\overline{t_r}\ |\ \overline{t_i}\overline{t_j}=\overline{t_j}\overline{t_i},\ \overline{t_i}\overline{a}=\overline{a}\overline{t_i},\ \overline{a}^{p_i^{y_i}-1}=1 \right> \simeq \Z_m \times \Z^r,
\] 
where $m=\gcd(p_1^{y_1}-1,...,p_r^{y_r}-1)$. So $\tau \Gamma_{n,1}=\left< \overline{a} \right>=\left< \overline{a},\gamma_2(\Gamma_{n,1}) \right>$, since $\gamma_2(\Gamma_{n,1})=1$.

Now let us show the proposition in the case $c \geq 2$. For $(\subset)$, let $x\gamma_{c+1} \in \tau \Gamma_{n,c}$. This means $x^k\in \gamma_{c+1}$ for some $k\geq 1$. Since $c \geq 2$, we have $x^k \in \gamma_{c+1} \subset \gamma_2$, so $x\gamma_2 \in \tau \Gamma_{n,1}=\left< \overline{a} \right>$. Write then $x=a^lg_2$ for $l \in \Z$ and $g_2 \in \gamma_2=\gamma_2(\Gamma_n)$. This gives $x\gamma_{c+1}=(a\gamma_{c+1})^l(g_2\gamma_{c+1}) \in \left< \overline{a}, \gamma_2(\Gamma_{n,c}) \right>$, as we wanted. To show $(\supset)$, we note that by Lemma \ref{nq1} we get $\overline{a}^{m^c}=1$ in $\Gamma_{n,c}$, so $\overline{a} \in \tau \Gamma_{n,c}$. So, we just need to show that $\gamma_2(\Gamma_{n,c})$ is a torsion subgroup of $\Gamma_{n,c}$. To do this, we invoke Lemma \ref{nq2}, by which we know it is enough to show the quotients
\[
\frac{\gamma_2(\Gamma_{n,c})}{\gamma_3(\Gamma_{n,c})},...,\frac{\gamma_c(\Gamma_{n,c})}{\gamma_{c+1}(\Gamma_{n,c})}
\] are all finite. But for every $2 \leq i \leq c$, by the known Isomorphism Theorem for quotients, we have
\[
\frac{\gamma_i(\Gamma_{n,c})}{\gamma_{i+1}(\Gamma_{n,c})}=\frac{\gamma_i(\Gamma_n)/\gamma_{c+1}(\Gamma_n)}{\gamma_{i+i}(\Gamma_n)/\gamma_{c+1}(\Gamma_n)} \simeq \frac{\gamma_i(\Gamma_n)}{\gamma_{i+i}(\Gamma_n)}=\gamma_i/\gamma_{i+1},
\] so let us show that $\gamma_2/\gamma_3,...,\gamma_c/\gamma_{c+1}$ are finite by induction. By Proposition \ref{generatorsofquotient}, we know they are abelian groups, generated by their $i$-fold comutator cosets. The group $\gamma_2/\gamma_3$ is generated by the elements $[t_i,a]\gamma_3$, $1 \leq i \leq r$ and by $[t_i,t_j]\gamma_3=1\gamma_3=\gamma_3$, which are trivial. By Proposition \ref{contascomcomut}, we get $[t_i,a]^m \gamma_3=[t_i,a^m]\gamma_3=\gamma_3$, since $a^m \in \gamma_2$ (Lemma \ref{nq1}). So all generators of $\gamma_2/\gamma_3$ have torsion. Since it is finitely generated and abelian, it must be a finite group. Finally, suppose by induction that $\gamma_i/\gamma_{i+1}$ is finite for some $i \geq 2$. By Proposition \ref{generatorsofquotient}, $\gamma_{i+1}/\gamma_{i+2}$ is then generated by the elements of the form $[x,y]\gamma_{i+2}$ with $x \in \gamma_i$ and $y \in \Gamma_n$. Since $\gamma_i/\gamma_{i+1}$ is finite, let $k=k(x,y) \geq 1$ such that $x^k \in \gamma_{i+1}$. Then $[x,y]^k \gamma_{i+2}=[x^k,y]\gamma_{i+2}=\gamma_{i+2}$. By the same argument we just used, this implies $\gamma_{i+1}/\gamma_{i+2}$ is finite and completes the proof.
\end{proof}

\begin{prop}\label{nqlcs}
$\gamma_k(\Gamma_{n,c})=\left< \overline{a}^{m^{k-1}} \right>$ for all $k \geq 2$ and $c \geq 1$.
\end{prop}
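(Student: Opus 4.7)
The plan is induction on $k \geq 2$. The inclusion $\langle \overline{a}^{m^{k-1}}\rangle \subseteq \gamma_k(\Gamma_{n,c})$ is immediate from Lemma \ref{nq1}, so all the content lies in the reverse inclusion.

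A structural observation I would record first: $\langle \overline{a}\rangle$ is a normal subgroup of $\Gamma_{n,c}$ (since $\overline{t_j}\overline{a}\,\overline{t_j}^{-1} = \overline{a}^{p_j^{y_j}}$), and each $\overline{t_j}$ acts on $\langle \overline{a}\rangle$ by exponent-multiplication $\overline{a}^\ell \mapsto \overline{a}^{p_j^{y_j}\ell}$. The useful consequence is that any $\Gamma_{n,c}$-normal closure of a subset of $\langle \overline{a}\rangle$ coincides with the ordinary subgroup of $\langle \overline{a}\rangle$ it generates, since conjugation acts by integer exponent-multiplication on $\langle \overline{a}\rangle$ and thus cannot enlarge cyclic subgroups.

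For the base $k=2$: $\gamma_2(\Gamma_{n,c})$ is the normal closure of the commutators of the generators, which reduce to $[\overline{t_i},\overline{t_j}] = 1$ and $[\overline{t_i},\overline{a}] = \overline{a}^{p_i^{y_i}-1} \in \langle \overline{a}\rangle$. By the observation, this normal closure equals the subgroup of $\langle \overline{a}\rangle$ generated by the $\overline{a}^{p_i^{y_i}-1}$, namely $\langle \overline{a}^m\rangle$.

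For the inductive step, assume $\gamma_k(\Gamma_{n,c}) = \langle \overline{a}^{m^{k-1}}\rangle$. Then $\gamma_{k+1}(\Gamma_{n,c}) = [\gamma_k, \Gamma_{n,c}]$ is generated by commutators $[\overline{a}^{m^{k-1}\ell}, x]$ with $\ell\in\Z$ and $x\in\Gamma_{n,c}$. Iterated use of the standard identities (which express $[u^\ell, z]$ and $[u, vw]$ in terms of $[u,z], [u,v], [u,w]$ and their conjugates) reduces these to conjugates of $[\overline{a}^{m^{k-1}}, g]$ for $g$ a generator of $\Gamma_{n,c}$. The case $g = \overline{a}$ is trivial, while a direct computation using $\overline{t_i}\,\overline{a}\,\overline{t_i}^{-1} = \overline{a}^{p_i^{y_i}}$ gives $[\overline{a}^{m^{k-1}}, \overline{t_i}] = \overline{a}^{-m^{k-1}(p_i^{y_i}-1)} = \overline{a}^{-m^k s_i}$ with $s_i := (p_i^{y_i}-1)/m$. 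Since these elements lie in $\langle \overline{a}\rangle$ and $\gcd_i(s_i) = 1$, the structural observation gives $\gamma_{k+1}(\Gamma_{n,c}) = \langle \overline{a}^{m^k}\rangle$.

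The main obstacle I anticipate is bookkeeping the commutator identities to reduce arbitrary $[u, v] \in [\gamma_k, \Gamma_{n,c}]$ to products of conjugates of $[\overline{a}^{m^{k-1}}, g]$, $g$ a generator. The structural observation makes this painless, as every conjugate stays inside the abelian subgroup $\langle \overline{a}\rangle$, where conjugation is integer exponent multiplication and hence cannot escape the cyclic target $\langle \overline{a}^{m^k}\rangle$. For $k \geq c+1$ both sides of the claim collapse to the trivial subgroup ($\overline{a}^{m^c} = 1$ in $\Gamma_{n,c}$ by Lemma \ref{nq1}, and $\gamma_{c+1}(\Gamma_{n,c}) = 1$), so the induction covers all the stated cases.
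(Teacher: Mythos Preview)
Your proof is correct, and it takes a genuinely different route from the paper's.

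The paper proceeds in two stages: first it shows by induction that each successive quotient satisfies $\gamma_k(\Gamma_{n,c})/\gamma_{k+1}(\Gamma_{n,c}) = \langle \overline{a}^{m^{k-1}}\gamma_{k+1}(\Gamma_{n,c})\rangle$, using Propositions \ref{contascomcomut} and \ref{generatorsofquotient} for the commutator calculus; then, for a given $x \in \gamma_k(\Gamma_{n,c})$, it telescopes down the filtration, writing $x = \overline{a}^{j_k m^{k-1}} x_{k+1}$ with $x_{k+1} \in \gamma_{k+1}$, then $x_{k+1} = \overline{a}^{j_{k+1} m^k} x_{k+2}$, and so on, terminating at step $c$ because $\gamma_{c+1}(\Gamma_{n,c}) = 1$.

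Your argument bypasses the quotient filtration entirely by exploiting a structural fact specific to this group: $\langle \overline{a}\rangle$ is a cyclic normal subgroup of $\Gamma_{n,c}$, so every subgroup of it is automatically characteristic, and therefore normal closures within $\langle \overline{a}\rangle$ coincide with ordinary subgroups. This lets you compute $\gamma_k$ directly rather than modulo $\gamma_{k+1}$. The approach is cleaner and avoids the two-step structure, but note that the normality of $\langle \overline{a}\rangle$ (specifically, that $\overline{t_i}^{-1}\overline{a}\,\overline{t_i}$ is an integer power of $\overline{a}$) genuinely requires the finiteness of the order of $\overline{a}$ together with $\gcd(p_i^{y_i}, m) = 1$; you might make this dependence on Lemma~\ref{nq1} explicit at the outset rather than only at the end. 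The paper's filtration argument, by contrast, is the generic nilpotent-group template and would transfer to situations lacking such a convenient cyclic normal subgroup.
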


\begin{proof}
First, we will show that
\begin{equation}\label{nqeq1}
\frac{\gamma_k(\Gamma_{n,c})}{\gamma_{k+1}(\Gamma_{n,c})}=\left< \overline{a}^{m^{k-1}}\gamma_{k+1}(\Gamma_{n,c}) \right>.
\end{equation} For $k=2$, by Proposition \ref{generatorsofquotient}, $\frac{\gamma_2(\Gamma_{n,c})}{\gamma_3(\Gamma_{n,c})}$ is generated by the cosets $[\overline{t_i},\overline{a}]\gamma_3(\Gamma_{n,c})$. Since $[\overline{t_i},\overline{a}]=\overline{a}^{p_i^{y_i}-1}$, we have
\[
\frac{\gamma_2(\Gamma_{n,c})}{\gamma_3(\Gamma_{n,c})}=\left< \overline{a}^{p_1^{y_1}-1}\gamma_3(\Gamma_{n,c}),...,\overline{a}^{p_r^{y_r}-1}\gamma_3(\Gamma_{n,c}) \right>=\left< \overline{a}^m \gamma_3(\Gamma_{n,c}) \right>
\](remember that $m=\gcd(p_1^{y_1}-1,...,p_r^{y_r}-1)$). Suppose now (\ref{nqeq1}) is true for some $k \geq 2$. We know $\frac{\gamma_{k+1}(\Gamma_{n,c})}{\gamma_{k+2}(\Gamma_{n,c})}$ is generated by the cosets $[x,z]\gamma_{k+2}(\Gamma_{n,c})$, where $x \in \gamma_k(\Gamma_{n,c})$ and $z \in \Gamma_{n,c}$. By induction, we can write $x=\overline{a}^{\alpha m^{k-1}}w_{k+1}$ for some $w_{k+1} \in \gamma_{k+1}(\Gamma_{n,c})$ and $\alpha \in \Z$. Then, by using Proposition \ref{contascomcomut} we get
\begin{eqnarray*}
[x,z]\gamma_{k+2}(\Gamma_{n,c}) &=& [\overline{a}^{\alpha m^{k-1}}w_{k+1},z]\gamma_{k+2}(\Gamma_{n,c})\\
								&=& [\overline{a}^{m^{k-1}},z]^\alpha[w_{k+1},z]\gamma_{k+2}(\Gamma_{n,c})\\
								&=&[\overline{a}^{m^{k-1}},z]^\alpha \gamma_{k+2}(\Gamma_{n,c}),
\end{eqnarray*} so the quotient $\frac{\gamma_{k+1}(\Gamma_{n,c})}{\gamma_{k+2}(\Gamma_{n,c})}$ is actually generated only by the cosets $[\overline{a}^{m^{k-1}},z] \gamma_{k+2}(\Gamma_{n,c})$. Since $[\overline{a}^{m^{k-1}},\overline{a}]$ is obviously trivial, the quotient group is generated only by the generators $[\overline{a}^{m^{k-1}},\overline{t_i}]\gamma_{k+2}(\Gamma_{n,c})$. Since $[\overline{a}^{m^{k-1}},\overline{t_i}]=\overline{a}^{(p_i^{y_i}-1)m^{k-1}}$, we obtain
\[
\frac{\gamma_{k+1}(\Gamma_{n,c})}{\gamma_{k+2}(\Gamma_{n,c})}=\left< \overline{a}^{(p_1^{y_1}-1)m^{k-1}}\gamma_{k+2}(\Gamma_{n,c}),...,\overline{a}^{(p_r^{y_r}-1)m^{k-1}}\gamma_{k+2}(\Gamma_{n,c}) \right>=\left< \overline{a}^\beta \gamma_{k+2}(\Gamma_{n,c}) \right>,
\] where
\[
\beta=\gcd((p_1^{y_1}-1)m^{k-1},...,(p_r^{y_r}-1)m^{k-1})=m^{k-1}\gcd(p_1^{y_1}-1,...,p_r^{y_r}-1)=m^k,
\] and this shows (\ref{nqeq1}). Now, let us show the proposition. The $(\supset)$ part is a direct consequence of Lemma \ref{nq1}. Let us show $(\subset)$. In the case $c<k$, we have $\gamma_k(\Gamma_{n,c})=1 \subset \left< \overline{a}^{m^{k-1}} \right>$. Suppose then $c\geq k$ and let $x \in \gamma_k(\Gamma_{n,c})$. Since $x\gamma_{k+1}(\Gamma_{n,c}) \in \left< \overline{a}^{m^{k-1}}\gamma_{k+1}(\Gamma_{n,c}) \right>$ (by (\ref{nqeq1})), write $x=\overline{a}^{j_km^{k-1}}x_{k+1}$ for $j_k \in \Z$ and $x_{k+1} \in \gamma_{k+1}(\Gamma_{n,c})$. By using (\ref{nqeq1}) again, we write $x_{k+1}=\overline{a}^{j_{k+1}m^k}x_{k+2}$ for $j_{k+1} \in \Z$ and $x_{k+2} \in \gamma_{k+2}(\Gamma_{n,c})$. We can do this recursively to obtain
\begin{eqnarray*}
x &=& \overline{a}^{j_km^{k-1}}\overline{a}^{j_{k+1}m^k}...\overline{a}^{j_cm^{c-1}}x_{c+1}\\
  &=& \overline{a}^{m^{k-1}(j_k+j_{k+1}m+...+j_c m^{c-k})}\\
  & \in & \left< \overline{a}^{m^{k-1}} \right>,
\end{eqnarray*}and the proof is complete.
\end{proof}

By Lemma \ref{nq1} and the two propositions above, we get

\begin{cor}\label{nq3}
$\tau \Gamma_{n,c} = \left< \overline{a} \right>$ and $\card(\tau \Gamma_{n,c}) \leq m^c$. \qed
\end{cor}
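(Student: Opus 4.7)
The plan is to derive the corollary as a short consequence of the two propositions immediately preceding it together with Lemma \ref{nq1}, with essentially no new computation required.

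First I would establish that $\tau\Gamma_{n,c}=\langle\overline{a}\rangle$. By the first of the two propositions we already know $\tau\Gamma_{n,c}=\langle\overline{a},\gamma_2(\Gamma_{n,c})\rangle$, and by Proposition \ref{nqlcs} applied with $k=2$ we have $\gamma_2(\Gamma_{n,c})=\langle\overline{a}^m\rangle\subset\langle\overline{a}\rangle$. Substituting this into the description of the torsion subgroup gives $\tau\Gamma_{n,c}=\langle\overline{a}\rangle$ directly. Of course the reverse inclusion $\langle\overline{a}\rangle\subset\tau\Gamma_{n,c}$ is the same one used inside the proof of that proposition, and follows from Lemma \ref{nq1}: taking $k=c$ gives $a^{m^c}\in\gamma_{c+1}(\Gamma_n)$, so $\overline{a}^{m^c}=1$ in $\Gamma_{n,c}$ and $\overline{a}$ is a torsion element.

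Second, for the cardinality bound, the same identity $\overline{a}^{m^c}=1$ just obtained shows that the cyclic subgroup $\langle\overline{a}\rangle$ has order dividing $m^c$. Combined with the first part this immediately yields $\card(\tau\Gamma_{n,c})\leq m^c$.

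There is no real obstacle here; the content of the corollary has been fully absorbed by the two propositions and Lemma \ref{nq1}, so the proof is a two-line assembly. The only thing to be mildly careful about is the degenerate case $c=1$, where $\gamma_2(\Gamma_{n,c})$ is trivial and $\overline{a}^m=1$ already in $\Gamma_{n,1}$; both assertions of the corollary remain valid and are in fact simpler to check in that case, since $\tau\Gamma_{n,1}=\langle\overline{a}\rangle$ was noted explicitly in the proof of the first proposition.
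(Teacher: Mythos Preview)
Your proposal is correct and matches the paper's approach exactly: the paper presents this corollary with no proof beyond the remark ``By Lemma \ref{nq1} and the two propositions above, we get'', and you have simply spelled out how those three ingredients combine. There is nothing to add.
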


\section{An isomorphism for $\Gamma_{n,c}$}

The next step is to find a presentation to $\Gamma_{n,c}$, so we will find an isomorphism between $\Gamma_{n,c}$ and a more known group. We will use the notations from the previous section and will also denote $\Z_{m^c}=\left< x\ |x^{m^c}=1 \right>$ and $\Z^r=\left< s_1,...,s_r\ | s_is_j=s_js_i\right>$. We define the group
\[
G_{n,c}= \Z_{m^c} \rtimes \Z^r,
\] where the action of $\Z^r$ on $\Z_{m^c}$ is given by $s_i x s_i^{-1}=x^{p_i^{y_i}}, 1 \leq i \leq r$.

\begin{obs}
Note first that the actions defined above are all automorphisms of $\Z_{m^c}$, since $\gcd(p_i^{y_i},m)=1$ (and so $\gcd(p_i^{y_i},m^c)=1$ for any $c\geq 1$). Second, all such automorphisms commute, for $\Z_{m^c}$ is cyclic. These facts show that there is a well defined homomorphism $\Z^r \to Aut(\Z_{m^c})$, so this semidirect product is well defined.
\end{obs}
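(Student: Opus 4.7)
The plan is to split the observation into three assertions and verify each in turn: (i) the specified maps $\varphi_i\colon x\mapsto x^{p_i^{y_i}}$ are automorphisms of $\Z_{m^c}$, (ii) the $\varphi_i$ pairwise commute inside $Aut(\Z_{m^c})$, and (iii) these data assemble into a well-defined homomorphism $\psi\colon\Z^r\to Aut(\Z_{m^c})$ giving an honest semidirect product $G_{n,c}=\Z_{m^c}\rtimes\Z^r$.

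For (i), the key step is a coprimality check: an endomorphism $x\mapsto x^k$ of a cyclic group of order $m^c$ is an automorphism exactly when $\gcd(k,m^c)=1$. I would first establish $\gcd(p_i^{y_i},m)=1$; since $m=\gcd(p_1^{y_1}-1,\ldots,p_r^{y_r}-1)$ divides each $p_i^{y_i}-1$, any prime dividing both $p_i^{y_i}$ and $m$ would also divide $p_i^{y_i}-(p_i^{y_i}-1)=1$, a contradiction. As $m$ and $m^c$ share the same set of prime divisors, this also gives $\gcd(p_i^{y_i},m^c)=1$, so each $\varphi_i$ is invertible (its inverse being multiplication by a modular inverse of $p_i^{y_i}$ modulo $m^c$).

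For (ii) I would invoke the standard identification $Aut(\Z_{m^c})\cong(\Z/m^c\Z)^\times$, whose commutativity immediately forces $\varphi_i\varphi_j=\varphi_j\varphi_i$. For (iii), once $r$ pairwise commuting automorphisms are available, the universal property of the free abelian group $\Z^r$ on $s_1,\dots,s_r$ produces a unique homomorphism $\psi\colon\Z^r\to Aut(\Z_{m^c})$ sending each $s_i$ to $\varphi_i$, and $\psi$ supplies precisely the structural datum needed to form $\Z_{m^c}\rtimes_\psi\Z^r$. There is no real obstacle here; the only non-formal ingredient is the elementary number-theoretic check $\gcd(p_i^{y_i},m)=1$, and this is exactly why $m$ was chosen to be the gcd of the $p_i^{y_i}-1$ in the first place.
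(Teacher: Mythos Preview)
Your proposal is correct and follows exactly the same approach as the paper: the observation in the paper is self-contained (there is no separate proof), and your write-up simply spells out the three steps it sketches --- the coprimality $\gcd(p_i^{y_i},m)=1$, commutativity via cyclicity of $\Z_{m^c}$, and the resulting homomorphism $\Z^r\to Aut(\Z_{m^c})$ --- with a bit more detail than the paper provides.
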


We will show that $\Gamma_{n,c} \simeq G_{n,c}$. To do this, we need:

\begin{lema}
$G_{n,c}$ is nilpotent of class $\leq c$.
\end{lema}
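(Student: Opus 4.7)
The plan is to show the stronger statement that $\gamma_k(G_{n,c})\subseteq \langle x^{m^{k-1}}\rangle$ for every $k\geq 2$, from which $\gamma_{c+1}(G_{n,c})\subseteq \langle x^{m^c}\rangle = 1$ follows immediately, giving nilpotency class $\leq c$. The key arithmetic observation powering everything is that $p_i^{y_i}\equiv 1 \pmod m$ for each $i$ (since $m$ divides each $p_i^{y_i}-1$), and hence for any $\vec b=(b_1,\dots,b_r)\in\Z^r$ the product $N(\vec b):=\prod_i p_i^{y_i b_i}$ also satisfies $N(\vec b)\equiv 1\pmod m$. This quantity is precisely the one that governs the conjugation action of $\Z^r$ on $\Z_{m^c}$: the defining relations give $\vec s^{\,\vec b}\cdot x = x^{N(\vec b)}$.

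First I would observe that since the quotient $G_{n,c}/\langle x\rangle \simeq \Z^r$ is abelian, $\gamma_2(G_{n,c})\subseteq\langle x\rangle$ and in fact $\gamma_k(G_{n,c})\subseteq\langle x\rangle$ for every $k\geq 2$. So all commutators of interest reduce to powers of $x$, and it suffices to track the exponent modulo $m^c$.

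For the base case $k=2$, I would compute a generic commutator $[x^{a_1}\vec s^{\,\vec b_1},x^{a_2}\vec s^{\,\vec b_2}]$ directly in the semidirect product. Expanding using the action and cancelling with the abelianness of $\langle x\rangle$, one obtains
\[
[x^{a_1}\vec s^{\,\vec b_1},x^{a_2}\vec s^{\,\vec b_2}] \;=\; x^{a_2(N(\vec b_1)-1)\,-\,a_1(N(\vec b_2)-1)}.
\]
By the arithmetic observation, both $N(\vec b_1)-1$ and $N(\vec b_2)-1$ are divisible by $m$, so this lies in $\langle x^m\rangle$. Since $\gamma_2$ is generated by such commutators, $\gamma_2(G_{n,c})\subseteq \langle x^m\rangle$.

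For the inductive step, assume $\gamma_k(G_{n,c})\subseteq\langle x^{m^{k-1}}\rangle$. Then $\gamma_{k+1}(G_{n,c})$ is generated by commutators $[z,g]$ with $z=x^{a m^{k-1}}$ and $g=x^{a'}\vec s^{\,\vec b}\in G_{n,c}$. Since $z\in \langle x\rangle$ is central within $N$, the computation collapses to
\[
[z,g] \;=\; z\cdot (g z^{-1} g^{-1}) \;=\; x^{a m^{k-1}(1-N(\vec b))},
\]
and again $N(\vec b)-1$ is divisible by $m$, so the exponent is divisible by $m^k$. Thus $\gamma_{k+1}(G_{n,c})\subseteq\langle x^{m^k}\rangle$, closing the induction. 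Setting $k=c+1$ yields $\gamma_{c+1}(G_{n,c})\subseteq\langle x^{m^c}\rangle=\{1\}$, as required. The only real obstacle is the bookkeeping in the $k=2$ commutator expansion in the semidirect product, but this is a direct computation once one uses that conjugation by $\vec s^{\,\vec b}$ sends $x$ to $x^{N(\vec b)}$.
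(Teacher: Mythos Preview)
Your proposal is correct and follows essentially the same approach as the paper: both prove by induction that $\gamma_k(G_{n,c})\subseteq \langle x^{m^{k-1}}\rangle$ using the arithmetic fact that $p_i^{y_i}\equiv 1\pmod m$, and then deduce $\gamma_{c+1}(G_{n,c})=1$. The only difference is in presentation---the paper argues more tersely by checking commutators of generators (using $[s_i,x^{m^{k-1}}]=x^{(p_i^{y_i}-1)m^{k-1}}\in\langle x^{m^k}\rangle$ and normality of $\langle x^{m^k}\rangle$), whereas you expand commutators of generic elements via the action $N(\vec b)$; both yield the same chain of inclusions.
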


\begin{proof}
Since $[s_i,x]=x^{p_i^{y_i}-1} \in \left< x^m \right>$ for every $i$, we have $\gamma_2(G_{n,c})\subset \left< x^m \right>$. Similarly, since $[s_i,x^m]=x^{(p_i^{y_i}-1)m} \in \left< x^{m^2} \right>$ for every $i$, in particular we have $[s_i,z] \in \left< x^{m^2}\right>$ for every $z \in \gamma_2(G_{n,c})$, so it is easy to see that $\gamma_3(G_{n,c})\subset \left< x^{m^2} \right>$. Recursively, we can show that $\gamma_k(G_{n,c})\subset \left< x^{m^{k-1}}\right>$ for every $k \geq 2$. In particular, $\gamma_{c+1}(G_{n,c})\subset \left< x^{m^c}\right>=1$, since $x^{m^c}=1$ in $\Z_{m^c}$. This shows the lemma.
\end{proof}

\begin{cor}\label{nq4}
$\tau G_{n,c}$ is a subgroup of $G_{n,c}$. Moreover, $\tau G_{n,c}=\Z_{m^c}= \left< x \right>$ and so $\card(\tau G_{n,c})=m^c$.
\end{cor}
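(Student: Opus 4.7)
The plan is to exploit the semidirect product structure rather than invoking nilpotency. Every element $g \in G_{n,c}$ has a unique expression $g = x^{k} w$ with $k \in \Z/m^c$ and $w \in \Z^r$ coming from the projection $\pi: G_{n,c} \twoheadrightarrow \Z^r$ onto the second factor of the semidirect product. The short exact sequence
\[
1 \to \Z_{m^c} \hookrightarrow G_{n,c} \xrightarrow{\pi} \Z^r \to 1
\]
is the key tool.

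First I would observe that $\Z_{m^c} = \langle x \rangle \subset \tau G_{n,c}$, since every element of $\langle x \rangle$ satisfies $y^{m^c}=1$. For the reverse inclusion, suppose $g \in \tau G_{n,c}$, so $g^{\ell}=1$ for some $\ell \geq 1$. Applying $\pi$, which is a homomorphism, yields $\pi(g)^{\ell} = 0$ in $\Z^r$. Because $\Z^r$ is torsion-free, this forces $\pi(g) = 0$, i.e.\ $g \in \ker \pi = \Z_{m^c} = \langle x \rangle$. Therefore $\tau G_{n,c} \subset \langle x \rangle$.

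Combining both inclusions gives $\tau G_{n,c} = \langle x \rangle = \Z_{m^c}$, which is manifestly a subgroup of $G_{n,c}$ of cardinality $m^c$. The statement that $\tau G_{n,c}$ is a subgroup then follows as a byproduct of the explicit identification, rather than needing to be established separately via the (available) fact that torsion elements in a nilpotent group form a subgroup.

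There is no real obstacle here: the semidirect product presentation together with torsion-freeness of $\Z^r$ collapses the argument into one line on each inclusion. The only minor subtlety is being clear that $\pi$ is a homomorphism whose kernel is exactly $\Z_{m^c}$, which is immediate from the definition of the semidirect product $G_{n,c} = \Z_{m^c} \rtimes \Z^r$.
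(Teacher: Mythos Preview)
Your argument is correct. The short exact sequence $1 \to \Z_{m^c} \to G_{n,c} \xrightarrow{\pi} \Z^r \to 1$ together with torsion-freeness of $\Z^r$ immediately pins down $\tau G_{n,c}$ as $\ker \pi = \langle x \rangle$, and the subgroup assertion comes for free.

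The paper takes a slightly different route. It places this statement as a corollary to the preceding lemma, which shows $G_{n,c}$ is nilpotent of class $\leq c$; the intended inference for the first claim is then the standard fact (alluded to in the introduction) that the torsion elements of a nilpotent group form a subgroup. The explicit identification $\tau G_{n,c}=\langle x\rangle$ is left implicit, and of course would require exactly the exact-sequence reasoning you give. Your approach is more economical: it bypasses nilpotency entirely and obtains both the subgroup property and the identification in one stroke. The paper's route, on the other hand, fits the parallel narrative it is building with $\Gamma_{n,c}$ (Corollary~\ref{nq3}), where nilpotency of the quotient genuinely is the mechanism that makes $\tau\Gamma_{n,c}$ a subgroup, and where no ready-made semidirect product decomposition is yet available.
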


\begin{teorema}\label{nq4.5}
$\Gamma_{n,c} \simeq G_{n,c}$.
\end{teorema}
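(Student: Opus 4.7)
The plan is to build a surjective homomorphism $\bar{\varphi}: \Gamma_{n,c} \to G_{n,c}$ directly from the presentation of $\Gamma_n$, and then deduce injectivity by a five-lemma-style argument that compares both sides through their torsion subgroups, whose structure was pinned down in Section 2.

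First I would check that the assignment $a \mapsto x$, $t_i \mapsto s_i$ extends to a well-defined homomorphism $\varphi: \Gamma_n \to G_{n,c}$. The defining relations $t_it_j = t_jt_i$ and $t_i a t_i^{-1} = a^{p_i^{y_i}}$ of $\Gamma_n$ hold in $G_{n,c}$ by the very definition of the semidirect product, so there is nothing to verify beyond the universal property of the presentation. Since the preceding lemma shows that $G_{n,c}$ is nilpotent of class at most $c$, we get $\varphi(\gamma_{c+1}(\Gamma_n)) \subseteq \gamma_{c+1}(G_{n,c}) = 1$, so $\varphi$ descends to a homomorphism $\bar{\varphi}: \Gamma_{n,c} \to G_{n,c}$, which is surjective because $x$ and the $s_i$ lie in its image.

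For injectivity I would play off the torsion short exact sequences on the two sides. By Corollary \ref{nq3}, $\tau\Gamma_{n,c} = \langle \bar{a}\rangle$ has order at most $m^c$, while by the preceding corollary $\tau G_{n,c} = \langle x\rangle$ has order exactly $m^c$; since $\bar{\varphi}(\bar{a}) = x$ generates $\tau G_{n,c}$, the restriction $\bar{\varphi}|_{\tau\Gamma_{n,c}}$ is surjective, and the order bound forces it to be an isomorphism (giving $|\tau\Gamma_{n,c}| = m^c$ as a bonus). For the torsion-free quotients, Proposition \ref{nqlcs} gives $\gamma_2(\Gamma_{n,c}) = \langle\bar{a}^m\rangle \subseteq \langle\bar{a}\rangle$, so $\Gamma_{n,c}/\langle\bar{a}\rangle$ is abelian and generated by the $r$ cosets of $\bar{t_1},\dots,\bar{t_r}$, hence a quotient of $\Z^r$; on the other side $G_{n,c}/\langle x\rangle \cong \Z^r$ with basis given by the cosets of the $s_i$, so $\bar{\varphi}$ induces a surjective endomorphism of $\Z^r$ that sends a generating set onto a basis, and is therefore an isomorphism.

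A routine diagram chase then closes the argument: if $\bar{\varphi}(g) = 1$, then the class of $g$ in $\Gamma_{n,c}/\langle\bar{a}\rangle$ is trivial, so $g \in \langle\bar{a}\rangle$, and injectivity of $\bar{\varphi}|_{\langle\bar{a}\rangle}$ gives $g = 1$. The main subtlety is the torsion-free quotient step: it is the containment $\gamma_2(\Gamma_{n,c}) \subseteq \langle\bar{a}\rangle$ coming from Proposition \ref{nqlcs} that guarantees $\Gamma_{n,c}/\langle\bar{a}\rangle$ is a quotient of $\Z^r$ in the first place, without which the dimension comparison would not go through. Everything else is a formal consequence of the presentations and the torsion computations already established.
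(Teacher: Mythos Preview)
Your argument is correct and follows essentially the same approach as the paper: construct the obvious surjection $\Gamma_{n,c}\to G_{n,c}$, show its restriction to the torsion subgroups is an isomorphism via the cardinality bounds of Corollaries~\ref{nq3} and~\ref{nq4}, and then deduce $\ker\bar\varphi\subset\langle\bar a\rangle$. The only minor difference is in that last step: the paper writes an arbitrary kernel element in normal form and projects to $\Z^r$ directly, whereas you argue more abstractly that the induced map on torsion-free quotients is a surjection of a quotient of $\Z^r$ onto $\Z^r$ and hence an isomorphism (strictly speaking this is not an ``endomorphism of $\Z^r$'' until you precompose with the quotient $\Z^r\twoheadrightarrow\Gamma_{n,c}/\langle\bar a\rangle$, but the Hopfian property of $\Z^r$ then forces both maps to be isomorphisms, so your conclusion stands).
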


\begin{proof}
Let $f:\Gamma_n \to G_{n,c}$ be the map $f(a)=x$ and $f(t_i)=s_i$. Since $f(t_i)f(a)f(t_i)^{-1}=s_ixs_i^{-1}=x^{p_i^{y_i}}=f(a)^{p_i^{y_i}}$, $f$ is a well defined group homomorphism. Since $f(\gamma_i(\Gamma_n)) \subset \gamma_i(G_{n,c})$, $f$ induces the morphism
\[
f:\Gamma_{n,c}=\frac{\Gamma_n}{\gamma_{c+1}(\Gamma_n)} \to \frac{G_{n,c}}{\gamma_{c+1}(G_{n,c})}=G_{n,c}
\] given by $f(\overline{a})=x$ and $f(\overline{t_i})=s_i$. It is obviously surjective. We are just left to show that $\ker(f)=1$, and to do that we will make use of the torsion subgroups. Since $f(\tau \Gamma_{n,c})\subset \tau G_{n,c}$ (this is true for any homomorphisms between nilpotent groups), there is the restriction morphism $f_\tau: \tau \Gamma_{n,c} \to \tau G_{n,c}$. By Corollaries \ref{nq3} and \ref{nq4}, we can actually write $f_\tau: \left< \overline{a} \right> \to \left< x \right>$. Since $f_\tau(\overline{a})=x$, it is clearly surjective. Now, $f_\tau$ is a surjective map from a finite set of $\leq m^c$ elements (Corollary \ref{nq3}) to a finite set with exactly $m^c$ elements (Corollary \ref{nq4}), so we must have $\card(\left< \overline{a} \right>)=m^c$ and $f_\tau$ an isomorphism. In particular, $\ker(f_\tau)=1$. We claim that $\ker(f) \subset \tau \Gamma_{n,c}$. In fact, let $z \in \ker(f)$. By using the relations in $\Gamma_n$, we can write

\[
z=\overline{t_1}^{k_1}...\overline{t_r}^{k_r}\overline{t_1}^{-\alpha_1}...\overline{t_r}^{-\alpha_r}\overline{a}^l {t_r}^{\alpha_r}...\overline{t_1}^{\alpha_1},
\] for $k_i,l \in \Z$ and $\alpha_i \geq 0$. So
\[
1=f(z)=s_1^{k_1}...s_r^{k_r}s_1^{-\alpha_1}...s_r^{-\alpha_r}x^ls_r^{\alpha_r}...s_1^{\alpha_1}.
\] Since $x \in \tau G_{n,c} \lhd G_{n,c}$ we have $s_1^{-\alpha_1}...s_r^{-\alpha_r}x^ls_r^{\alpha_r}...s_1^{\alpha_1} \in \tau G_{n,c}=\left< x \right>$, so $1=f(z)=s_1^{k_1}...s_r^{k_r}x^{l'}$ for some $l' \in \Z$. By projecting this equality under the natural homomorphism $G_{n,c} \to \Z^r$ we get $1=s_1^{k_1}...s_r^{k_r}$, which implies $k_i=0$ for every $i$. Therefore $z=\overline{t_1}^{-\alpha_1}...\overline{t_r}^{-\alpha_r}\overline{a}^l \overline{t_r}^{\alpha_r}...\overline{t_1}^{\alpha_1} \in \tau G_{n,c}$, since $\overline{a} \in \tau \Gamma_{n,c} \lhd \Gamma_{n,c}$, which shows the claim. Finally, this gives $\ker(f)=\ker(f)\cap \tau \Gamma_{n,c}=\ker(f_\tau)=1$ and the theorem is proved.
\end{proof}

\begin{cor}\label{nqcorpres}
For any $c \geq 1$, the nilpotent quotient $\Gamma_{n,c}$ has the following presentation:
\[
\Gamma_{n,c}=\left< x,s_1,...,s_r\ |\ x^{m^c}=1,\ s_is_j=s_js_i,\ s_i x s_i^{-1}=x^{p_i^{y_i}}  \right>.
\]\qed
\end{cor}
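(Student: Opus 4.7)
The plan is to derive this presentation as a direct consequence of Theorem \ref{nq4.5}, which already identifies $\Gamma_{n,c}$ with the semidirect product $G_{n,c}=\Z_{m^c}\rtimes\Z^r$. Since the claimed presentation matches exactly the generators and defining relations used to construct $G_{n,c}$, the only real work is to invoke (or briefly justify) the standard fact that a semidirect product $N\rtimes H$, with $N=\langle X\mid R\rangle$, $H=\langle Y\mid S\rangle$ and action $\varphi\colon H\to\mathrm{Aut}(N)$, admits the presentation $\langle X,Y\mid R,\ S,\ yxy^{-1}=\varphi(y)(x)\ \forall x\in X,y\in Y\rangle$.

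Concretely, I would proceed as follows. First, recall from the preceding section that $\Z_{m^c}=\langle x\mid x^{m^c}=1\rangle$, $\Z^r=\langle s_1,\dots,s_r\mid s_is_j=s_js_i\rangle$, and that the action defining $G_{n,c}$ is given by $s_i\cdot x=x^{p_i^{y_i}}$. Applying the standard semidirect-product presentation recalled above, $G_{n,c}$ admits exactly the presentation
\[
\left\langle x,s_1,\dots,s_r\ \bigl|\ x^{m^c}=1,\ s_is_j=s_js_i,\ s_ixs_i^{-1}=x^{p_i^{y_i}}\right\rangle.
\]
Combining this with the isomorphism $\Gamma_{n,c}\simeq G_{n,c}$ of Theorem \ref{nq4.5} yields the corollary.

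There is essentially no obstacle here: the corollary is a bookkeeping statement made available once the hard work of Theorem \ref{nq4.5} is done. If a reader wanted an independent check, one could verify directly from the original presentation of $\Gamma_n$ that killing $\gamma_{c+1}(\Gamma_n)$ amounts, modulo the already-established relation $\overline{a}^{m^c}=1$ from Lemma \ref{nq1} and Proposition \ref{nqlcs}, to adding no further relations beyond those listed; but the cleaner path is simply to quote Theorem \ref{nq4.5} and the semidirect-product presentation.
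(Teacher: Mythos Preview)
Your proposal is correct and matches the paper's approach: the corollary is stated with a \qed\ symbol and no proof, so it is intended as an immediate consequence of Theorem~\ref{nq4.5} together with the standard presentation of a semidirect product, exactly as you describe.
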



\section{Reidemeister numbers}

Because of the theorem above, from now on we will make the following identifications
\[
\Gamma_{n,c}=G_{n,c}=\Z_{m^c} \rtimes \Z^r=\left<x\right> \rtimes \left<s_1,...,s_r \right>.
\] It's also worth remembering that we will restrict us to investigate Reidemeister numbers of $\Gamma_{n,c}$ only in the case $r \geq 2$, for, if $r=1$, then $\Gamma_n$ is by definition a Baumslag-Solitar group $BS(1,n)$ and its Reidemeister numbers were studied in \cite{DaciDekimpe}. Let $\varphi \in Aut(\Gamma_{n,c})$. Since $\varphi(\tau \Gamma_{n,c})\subset \tau \Gamma_{n,c}$, we have an induced automorphism
\[
\overline{\varphi}:\frac{\Gamma_{n,c}}{\tau \Gamma_{n,c}}=\Z^r \to \Z^r=\frac{\Gamma_{n,c}}{\tau \Gamma_{n,c}}.
\] From now on, we will use the usual identification $Aut(\Z^r)=GL_r(\Z)$ which sees an automorphism of $\Z^r$ as its (integer invertible) matrix with respect to the coordinates $s_i$. So, if $\overline{\varphi}(s_i)=s_1^{a_{1i}}...s_r^{a_{ri}}$, we will identify
\[
\overline{\varphi}=(a_{ij})_{ij}=\begin{bmatrix} 
					a_{11} & \cdots 	& a_{1r} \\
					\vdots 		&  			& \vdots \\
					a_{r1} & \cdots 	& a_{rr} \\
\end{bmatrix}=\left[ A_1 \cdots A_r \right],\ \ \text{where}\ A_i=\begin{bmatrix} 
					a_{1i} \\
					\vdots		\\
					a_{ri} \\
\end{bmatrix} \in \Z^r.
\]

\begin{prop}\label{nq5}
If $\varphi \in Aut(\Gamma_{n,c})$, the following are equivalent:
\begin{itemize}
\item[(1)]$R(\varphi)=\infty$;
\item[(2)]$R(\overline{\varphi})=\infty$;
\item[(3)]$\det(\overline{\varphi}-Id) = 0$;
\item[(4)]$\overline{\varphi}$ has $1$ as an eigenvalue.
\end{itemize}
\end{prop}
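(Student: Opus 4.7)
The plan is to establish the four equivalences along the chain $(3)\Leftrightarrow(4)$, $(2)\Leftrightarrow(3)$, and $(1)\Leftrightarrow(2)$. The link $(3)\Leftrightarrow(4)$ is purely linear-algebraic: $\det(\overline{\varphi}-\mathrm{Id})=0$ is precisely the condition that $1$ be a root of the characteristic polynomial of $\overline{\varphi}\in GL_r(\Z)$, i.e.\ an eigenvalue.

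For $(2)\Leftrightarrow(3)$, I would use the standard abelian reduction of twisted conjugacy: for an automorphism $\psi$ of an abelian group $A$, one has $u\sim_\psi v$ iff $v-u\in(\mathrm{Id}-\psi)(A)$, so $R(\psi)=[A:(\mathrm{Id}-\psi)A]$. Applied to $A=\Z^r$ and the integer matrix $B=\overline{\varphi}$, the Smith normal form of $\mathrm{Id}-B$ shows that this index equals $|\det(\mathrm{Id}-B)|$ when non-zero and is infinite otherwise, which yields the equivalence with $(3)$.

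For $(1)\Leftrightarrow(2)$, I would exploit the short exact sequence
\[
1\to\tau\Gamma_{n,c}\to\Gamma_{n,c}\to\Z^r\to 1,
\]
where $\tau\Gamma_{n,c}$ is characteristic. The implication $(2)\Rightarrow(1)$ is the routine observation that passing to a quotient by a $\varphi$-invariant normal subgroup induces a well-defined surjection from Reidemeister classes of $\varphi$ onto those of $\overline{\varphi}$, so $R(\varphi)\geq R(\overline{\varphi})$. For the converse $(1)\Rightarrow(2)$, the key step is a fiber-count: each $\overline{\varphi}$-Reidemeister class in $\Z^r$ is the image of at most $|\tau\Gamma_{n,c}|$ distinct $\varphi$-Reidemeister classes of $\Gamma_{n,c}$. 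Concretely, if $g'\in\Gamma_{n,c}$ projects to the $\overline{\varphi}$-class of $\overline{g}$, then $g'=zg\varphi(z)^{-1}n$ for some $z\in\Gamma_{n,c}$ and $n\in\tau\Gamma_{n,c}$; twisted conjugation by $z^{-1}$ gives $g'\sim_\varphi g\cdot(\varphi(z)^{-1}n\varphi(z))$, and since $\tau\Gamma_{n,c}$ is normal, this element lies in the finite set $\{gn':n'\in\tau\Gamma_{n,c}\}$, of cardinality at most $m^c$ by Corollary \ref{nq3}. Hence $R(\varphi)\leq m^c\cdot R(\overline{\varphi})$, so finiteness of $R(\overline{\varphi})$ forces finiteness of $R(\varphi)$.

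The only delicate point is the fiber bound in the last step, but there is no deep obstacle: its two essential ingredients, namely the normality and the \emph{finiteness} of $\tau\Gamma_{n,c}$, have both been established in Section 2. Without finiteness of the torsion subgroup, the implication $(1)\Rightarrow(2)$ would fail in general, which is precisely why the torsion computations culminating in Corollary \ref{nq3} are the genuine input of this proposition.
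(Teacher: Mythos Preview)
Your proof is correct and follows essentially the same strategy as the paper: the equivalences $(2)\Leftrightarrow(3)\Leftrightarrow(4)$ are treated as standard, $(2)\Rightarrow(1)$ comes from the surjection on Reidemeister classes induced by the quotient map, and $(1)\Rightarrow(2)$ is obtained via the bound $R(\varphi)\le m^c\cdot R(\overline{\varphi})$ using finiteness of $\tau\Gamma_{n,c}$. The only difference is cosmetic: the paper carries out this last fiber count by an explicit coordinate computation in the semidirect product $\Z_{m^c}\rtimes\Z^r$, while you phrase it abstractly via the short exact sequence and normality of the torsion subgroup.
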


\begin{proof}

Items $(2),(3)$ and $(4)$ are well known to be all equivalent. Also, we have an obvious commutative diagram involving the automorphisms $\varphi$, $\overline{\varphi}$ and the projection $\pi:\Gamma_{n,c}\to \mathbb{Z}^r$. Thus, by Lemma 1.1 of \cite{DaciWong}, it follows that $(2)$ implies $(1)$. So we only have to prove that $(1)$ implies $(2)$.

To simplify the computation, let us use the following notation in this proof: given $y=(y_1,...,y_r) \in \Z^r$ (either a row or a column vector), we will denote the element $s_1^{y_1}...s_r^{y_r} \in \Gamma_{n,c}$ by $S^y$, and the scalar product of $k \in \Z$ by $y$ is denoted by $ky$. With this notation, it turns out that any element of $\Gamma_{n,c}$ is of the form $S^y x^\beta$ for some $y \in \Z^r$ and $\beta \in \Z$. Suppose then that $R(\overline{\varphi})=d<\infty$ and write $\mathcal{R}(\overline{\varphi})=\{[v_1]_{\overline{\varphi}},...,[v_d]_{\overline{\varphi}}\}$ for $v_i \in \Z^r$ or, equivalently, $\frac{\Z^r}{\im(\overline{\varphi}-Id)}=\{\overline{v_1},...,\overline{v_d}\}$ (where $\overline{v_i}=v_i+\im(\overline{\varphi}-Id)$). Write $\varphi(x)=x^\mu$ (for some $\mu \in \Z$ with $\gcd(\mu,m^c)=1$) and $\varphi(s_i)=S^{A_i}x^{\beta_i}$, $\beta_i \in \Z$. Given that the $s_i$-coordinates behave well in the $\Gamma_{n,c}$, for any $k=(k_1,...,k_r) \in \Z^r$ and $l \in \Z$ we have
\[ \varphi(S^kx^l)=S^{\overline{\varphi}(k)}x^{\theta},\]
for some $\theta \in \Z$. This implies that, for any $j \in \Z$ and $y \in \Z^r$,
\[ (S^kx^l)(S^yx^j)\varphi(S^kx^l)^{-1}=  S^{y+(Id-\overline{\varphi})(k)}x^{\tilde{\theta}},\]
for some $\tilde{\theta}\in \Z$. This means that, if two vectors $y,y' \in \Z^r$ are such that $\overline{y}=\overline{y'} \in \frac{\Z^r}{\im(\overline{\varphi}-Id)}$, then every element $S^yx^j$ is $\varphi$-conjugated to some element $S^{y'}x^\theta$ for some $0 \leq \theta < m^c$. Since $\frac{\Z^r}{\im(\overline{\varphi}-Id)}=\{\overline{v_1},...,\overline{v_d}\}$, every element $S^yx^j$ is $\varphi$-conjugated to some $S^{v_i}x^{\theta}$, $1 \leq i \leq d$, $0 \leq \theta < m^c$, so $R(\varphi) \leq dm^c <\infty$ and the proposition is proved.

\end{proof}

In the rest of the work we will use the following notation: we know that $\gcd(p_i^{y_i},m^c)=1$. This means that $p_i^{y_i}$ is an invertible element in the commutative ring $\Z_{m^c}$ (now thought in the abelian notation $\Z_{m^c}=\{0,1,...,m^c-1\}$). So, we will naturally denote by $p_i^{-y_i}$ the inverse element $(p_i^{y_i})^{-1} \in \Z_{m^c}$ and, similarly, we define $p_i^{-ky_i}$ as $(p_i^{ky_i})^{-1}$ for any $k \geq 0$, so it makes sense to write $p_i^{ky_i}$ for any $k \in \Z$, thinking of it as an invertible element of the ring $\Z_{m^c}$. We are saying this to avoid a possible misinterpretation of $p_i^{-y_i}$ as $\frac{1}{p_i^{y_i}} \in \Q$, for example. With this notation, it is clear that $s_i^k x s_i^{-k}=x^{p_i^{ky_i}}$ for any $k \in \Z$.

\begin{prop}\label{nq6}
$\Gamma_{n,c}$ has not property $R_\infty$ if and only if there is $M=(a_{ij})_{ij} \in Gl_r(\Z)$ such that
\begin{itemize}
\item $\det(M-Id) \neq 0$;
\item for any $1 \leq i \leq r$,
\[
p_1^{a_{1i}y_1}p_2^{a_{2i}y_2}\dots p_r^{a_{ri}y_r}\equiv p_i^{y_i}\ \ \text{mod}\ m^c.\ \ \ \ \ (M,c,i)
\]
\end{itemize}
\end{prop}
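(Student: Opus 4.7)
The strategy is to invoke Proposition \ref{nq5}: $\Gamma_{n,c}$ fails $R_\infty$ iff some $\varphi\in Aut(\Gamma_{n,c})$ satisfies $\det(\overline{\varphi}-Id)\neq 0$. Thus the proposition amounts to classifying which matrices $M\in GL_r(\Z)$ arise as $\overline{\varphi}$ for some automorphism $\varphi$ of $\Gamma_{n,c}$, and the claim is that these are exactly the $M$ satisfying all $r$ congruences $(M,c,i)$.

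For the $(\Leftarrow)$ direction, given such $M=(a_{ij})$ with columns $A_i\in\Z^r$, I would define the candidate endomorphism on the generators of Corollary \ref{nqcorpres} by $\varphi(x)=x$ and $\varphi(s_i)=S^{A_i}$. The commutation identity $s_j^k x s_j^{-k}=x^{p_j^{ky_j}}$ recorded just before the proposition yields
\[ S^{A_i}xS^{-A_i}=x^{P_i},\qquad P_i:=p_1^{a_{1i}y_1}\cdots p_r^{a_{ri}y_r}\in\Z_{m^c}, \]
so $(M,c,i)$ is precisely the condition that the defining relation $s_ixs_i^{-1}=x^{p_i^{y_i}}$ is preserved; the remaining relations $x^{m^c}=1$ and $s_is_j=s_js_i$ hold trivially. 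Hence $\varphi$ is a well-defined endomorphism with $\overline{\varphi}=M$, and bijectivity follows from a diagram chase on the short exact sequence
\[ 1\to\tau\Gamma_{n,c}\to\Gamma_{n,c}\to\Z^r\to 1 \]
furnished by Corollaries \ref{nq3} and \ref{nq4}: $\varphi$ fixes $\tau\Gamma_{n,c}=\langle x\rangle$ pointwise and induces the invertible matrix $M$ on the quotient $\Z^r$. Proposition \ref{nq5} then yields $R(\varphi)<\infty$.

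For the $(\Rightarrow)$ direction, let $\varphi\in Aut(\Gamma_{n,c})$ satisfy $\det(\overline{\varphi}-Id)\neq 0$ and set $M=\overline{\varphi}$. Since $\tau\Gamma_{n,c}$ is characteristic and finite, $\varphi$ restricts to an automorphism of $\langle x\rangle\cong\Z_{m^c}$, so $\varphi(x)=x^\mu$ with $\gcd(\mu,m^c)=1$. Writing $\varphi(s_i)=S^{A_i}x^{\beta_i}$, the $A_i$ are exactly the columns of $M$. Applying $\varphi$ to $s_ixs_i^{-1}=x^{p_i^{y_i}}$ and simplifying via the same commutation identity leaves $x^{\mu P_i}=x^{\mu p_i^{y_i}}$; cancelling the unit $\mu$ modulo $m^c$ then produces $(M,c,i)$.

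The only step with genuine subtlety is the bijectivity check in $(\Leftarrow)$: since $\Gamma_{n,c}$ is infinite, there is no automatic Hopfian shortcut, and one must pass through the torsion/quotient short exact sequence. Everything else reduces to the commutation formula, and notably the extra parameters $\mu$ and $\beta_i$ in a general automorphism play no role in the final condition $(M,c,i)$.
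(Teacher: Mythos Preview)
Your argument is correct and matches the paper in the $(\Rightarrow)$ direction. In the $(\Leftarrow)$ direction you diverge at the bijectivity step: the paper establishes that $\varphi$ is an automorphism by explicitly building its inverse. Concretely, it sets $N=M^{-1}=(b_{ij})$, verifies via a direct computation with the identities $\sum_k a_{jk}b_{ki}=\delta_{ji}$ that each congruence $(N,c,i)$ also holds, defines $\psi$ from $N$ by the same recipe, and checks $\varphi\psi=\psi\varphi=Id$. Your route---a diagram chase on the extension $1\to\tau\Gamma_{n,c}\to\Gamma_{n,c}\to\Z^r\to1$ using that $\varphi$ is the identity on the (finite, characteristic) kernel and induces the invertible $M$ on the quotient---is equally valid and more conceptual; it avoids the somewhat fiddly verification of $(N,c,i)$. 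The trade-off is that the paper's computation yields, as a by-product, the pleasant fact that the set of matrices satisfying the congruences $(M,c,i)$ is closed under inversion in $GL_r(\Z)$, which your argument does not record. One small remark: your aside that ``there is no automatic Hopfian shortcut'' is slightly misplaced---$\Gamma_{n,c}$, being finitely generated nilpotent, \emph{is} Hopfian---but this does not help anyway, since one would still have to prove surjectivity first, and your five-lemma-style argument handles injectivity and surjectivity in one stroke.
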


\begin{proof} Suppose first that $\Gamma_{n,c}$ has not property $R_\infty$. Let $\varphi \in Aut(\Gamma_{n,c})$ such that $R(\varphi)<\infty$. Let $M=\overline{\varphi} \in Gl_r(\Z)$, and write $M=(a_{ij})_{ij}$. By Proposition \ref{nq5}, we have $\det(M-Id)\neq 0$. Since $\varphi(\tau \Gamma_{n,c}) \subset \tau \Gamma_{n,c}$, we have $\varphi(x)=x^\mu$ for some $\mu \in \Z$ such that $\gcd(\mu,m^c)=1$. Let us show that for any $1 \leq i \leq r$ the equation $(M,c,i)$ holds. For any such $i$, since $\varphi$ is a homomorphism of $\Gamma_{n,c}$ it must satisfy $\varphi(s_i)\varphi(x)\varphi(s_i)^{-1}=\varphi(x)^{p_i^{y_i}}$, so
\[
s_1^{a_{1i}}\dots s_r^{a_{ri}}x^\mu s_r^{-a_{ri}}\dots s_1^{-a_{1i}}=x^{\mu p_i^{y_i}}
\]or, equivalently,
\[
x^{\mu p_1^{a_{1i}y_1}\dots p_r^{a_{ri}y_r}}=x^{\mu p_i^{y_i}}.
\] Then $\mu p_1^{a_{1i}y_1}\dots p_r^{a_{ri}y_r}\equiv \mu p_i^{y_i}$ mod $m^c$, and since $\gcd(\mu,m^c)=1$, we have $p_1^{a_{1i}y_1}\dots p_r^{a_{ri}y_r}\equiv p_i^{y_i}$ mod $m^c$, which is exactly $(M,c,i)$. This shows the ``if'' part. Suppose now that there is such a matrix $M=(a_{ij})_{ij}$ and let us show $\Gamma_{n,c}$ has not $R_\infty$. Define $\varphi:\Gamma_{n,c} \to \Gamma_{n,c}$ by $\varphi(x)=x$ and $\varphi(s_i)=s_1^{a_{1i}}s_2^{a_{2i}}...s_r^{a_{ri}}$. Let us check that $\varphi$ is a well defined homomorphism:
\[
\varphi(s_i)\varphi(x)\varphi(s_i)^{-1}=s_1^{a_{1i}}s_2^{a_{2i}}...s_r^{a_{ri}}x s_r^{-a_{ri}}...s_2^{-a_{2i}}s_1^{-a_{1i}}=x^{p_1^{a_{1i}y_1}...p_r^{a_{ri}y_r}}=\varphi(x)^{p_1^{a_{1i}y_1}...p_r^{a_{ri}y_r}}=\varphi(x)^{p_i^{y_i}},
\]the last equality being true by $(M,c,i)$. Also, since the $s_i$ commute, we obviously have
\[
\varphi(s_i)\varphi(s_j)=s_1^{a_{1i}}...s_r^{a_{ri}}s_1^{a_{1j}}...s_r^{a_{rj}}=s_1^{a_{1j}}...s_r^{a_{rj}}s_1^{a_{1i}}...s_r^{a_{ri}}=\varphi(s_j)\varphi(s_i).
\]Finally,
\[
\varphi(x)^{m^c}=x^{m^c}=1,
\] so $\varphi$ is in fact a homomorphism. Let us now construct an inverse homomorphism. Let $N=M^{-1} \in GL_r(\Z)$ and write $N=(b_{ij})_{ij}$. Let us show that, for any $1 \leq i \leq r$, $N$ satisfies the equation $(N,c,i)$, that is $p_1^{b_{1i}y_1}p_2^{b_{2i}y_2}..p_r^{b_{ri}y_r}=p_i^{y_i}$ mod $m^c$. Since $MN=Id$, for any $1 \leq i,j \leq r$ we have
\[
\sum_{k=1}^r a_{ik}b_{kj}=(MN)_{ij}=Id_{ij}=\delta_{ij},
\] where $\delta_{ij}$ is the Kronecker delta. Fix $i$. We do the following: for each fixed $1 \leq j \leq r$, we raise both sides of equation $(M,c,j)$ to the power of $b_{ji}$ and obtain
\[
p_1^{a_{1j}b_{ji}y_1}p_2^{a_{2j}b_{ji}y_2}\dots p_r^{a_{rj}b_{ji}y_r}\equiv p_j^{b_{ji}y_j}\ \ mod\ m^c
\]
Now, if we do the product of all the $r$ equations above (on both sides, of course) and rearrange the left side according to the primes we get
\[
p_1^{(a_{11}b_{1i}+\dots +a_{1r}b_{ri})y_1}p_2^{(a_{21}b_{1i}+\dots +a_{2r}b_{ri})y_2}\dots p_r^{(a_{r1}b_{1i}+\dots +a_{rr}b_{ri})y_r}\equiv p_1^{b_{1i}y_1}p_2^{b_{2i}y_2}\dots p_r^{b_{ri}y_r}\ \ mod\ m^c,
\]or
\[
p_1^{(\sum_{k}a_{1k}b_{ki})y_1}p_2^{(\sum_{k}a_{2k}b_{ki})y_2}\dots p_r^{(\sum_{k}a_{rk}b_{ki})y_r}\equiv p_1^{b_{1i}y_1}p_2^{b_{2i}y_2}\dots p_r^{b_{ri}y_r}\ \ mod\ m^c,
\]or even
\[
p_1^{\delta_{1i}y_1}p_2^{\delta_{2i}y_2}\dots p_r^{\delta_{ri}y_r}\equiv p_1^{b_{1i}y_1}p_2^{b_{2i}y_2}\dots p_r^{b_{ri}y_r}\ \ mod\ m^c,
\]which results in 
\[
p_i^{y_i} \equiv p_1^{b_{1i}y_1}p_2^{b_{2i}y_2}\dots p_r^{b_{ri}y_r}\ \ mod\ m^c,
\]which is exactly $(N,c,i)$, as we wanted. Now define $\psi: \Gamma_{n,c} \to \Gamma_{n,c}$ by $\psi(x)=x$ and $\psi(s_i)=s_1^{b_{1i}}s_2^{b_{2i}}...s_r^{b_{ri}}$. As we did with $\varphi$, the fact that $N$ satisfies $(N,c,i)$ for all $i$ gives us that $\psi$ is a group homomorphism. Of course we have $\varphi(\psi(x))=x$. Also, by the fact that $MN=Id$, straightforward calculations show that $\varphi(\psi(s_i))=s_i$.
Similarly, we show that $\psi \varphi =Id$ by using that $NM=Id$, so $\varphi \in Aut(\Gamma_{n,c})$. Since $\overline{\varphi}=M$ we have $\det(\overline{\varphi}-Id)=\det(M-Id) \neq 0$ by hypothesis, so $R(\varphi)<\infty$ by Proposition \ref{nq5}. This completes the proof.
\end{proof}

\begin{obs}
Implicit in the proof of Proposition \ref{nq6} above is the classification of all matrices in $GL_r(\mathbb{Z})$ which can be extended to automorphisms of $\Gamma_{n,c}$. In other words, given a matrix $M=(a_{ij})_{ij} \in GL_r(\Z)$, there is an automorphism $\varphi$ of $\Gamma_{n,c}$ such that $\overline{\varphi}=M$ if and only if all equations $(M,c,i)$ are satisfied. 
\end{obs}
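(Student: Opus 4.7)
The plan is to extract both implications from how any automorphism must act on the torsion subgroup $\tau\Gamma_{n,c} = \langle x \rangle$ (Corollary \ref{nq3}, Theorem \ref{nq4.5}). First I would handle necessity. Assume $\varphi \in Aut(\Gamma_{n,c})$ with $\overline{\varphi} = M = (a_{ij})$. Since $\tau\Gamma_{n,c}$ is characteristic, $\varphi(x) = x^\mu$ with $\gcd(\mu,m^c) = 1$, and by the definition of $\overline{\varphi}$ there exist $\beta_i \in \Z$ such that $\varphi(s_i) = s_1^{a_{1i}}\cdots s_r^{a_{ri}}x^{\beta_i}$. Applying $\varphi$ to the defining relation $s_i x s_i^{-1} = x^{p_i^{y_i}}$, the factor $x^{\beta_i}$ commutes with $x^\mu$ and cancels, while iterated conjugation by the $s_k$ (with negative powers permitted because $p_k^{y_k}$ is a unit mod $m^c$) turns the left-hand side into $x^{\mu p_1^{a_{1i}y_1}\cdots p_r^{a_{ri}y_r}}$. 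Matching exponents modulo $m^c$ and cancelling the unit $\mu$ produces exactly the congruence $(M,c,i)$.

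For sufficiency, assume every $(M,c,i)$ holds. I would define a map on generators by $\varphi(x) = x$ and $\varphi(s_i) = s_1^{a_{1i}}\cdots s_r^{a_{ri}}$ and verify it respects each relation of the presentation in Corollary \ref{nqcorpres}: the torsion relation $\varphi(x)^{m^c} = 1$ is immediate; the commuting relations follow because the $s_k$ already commute in $\Gamma_{n,c}$; and the conjugation relations are precisely the content of $(M,c,i)$. Hence $\varphi$ is a well-defined endomorphism with $\overline{\varphi} = M$.

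To promote $\varphi$ to an automorphism, I would let $N = M^{-1} = (b_{ij})$ and first verify that $N$ also satisfies $(N,c,i)$ for every $i$. The trick is to raise $(M,c,j)$ to the power $b_{ji}$ and then multiply the resulting $r$ congruences over $j$: the exponent of each prime $p_k$ on the left collects into $\bigl(\sum_{j} a_{kj}b_{ji}\bigr)y_k = \delta_{ki}y_k$ by $MN = Id$, so the left reduces to $p_i^{y_i}$, while the right side becomes $\prod_k p_k^{b_{ki}y_k}$ --- which is $(N,c,i)$. The analogous construction then gives an endomorphism $\psi$ determined by $\psi(s_i) = s_1^{b_{1i}}\cdots s_r^{b_{ri}}$ and $\psi(x) = x$, and the identities $MN = NM = Id$ together with commutativity of the $s_k$ in $\Gamma_{n,c}$ imply that the compositions $\varphi\psi$ and $\psi\varphi$ fix every generator. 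Therefore $\varphi$ is an automorphism inducing $M$.

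The main obstacle is the multiplicative bookkeeping needed to show that $N = M^{-1}$ inherits its own family of congruences from the $(M,c,j)$; this relies on being able to manipulate exponents of the $p_k^{y_k}$ as units in the ring $\Z_{m^c}$, which in turn rests on $\gcd(p_k^{y_k}, m) = 1$ noted at the start of Section 3.
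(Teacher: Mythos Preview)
Your proposal is correct and follows essentially the same approach the paper indicates: both directions are precisely the arguments embedded in the proof of Proposition~\ref{nq6}, including the construction of the inverse endomorphism via $N=M^{-1}$ and the multiplicative trick (raise $(M,c,j)$ to the power $b_{ji}$ and multiply over $j$) that establishes the congruences $(N,c,i)$. If anything, your necessity direction is slightly more careful than the paper's, since you explicitly track and cancel the $x^{\beta_i}$ factor in $\varphi(s_i)$, which the paper's displayed computation in Proposition~\ref{nq6} silently drops.
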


To proceed, we need the following lemma, which can be easily shown by elementary number theory and induction on $k$:

\begin{lema}
Let $x,m \geq 2$. If $x=1\ \text{mod}\ m$, then $x^{m^k}=1\ mod\  m^{k+1}$ for any $k \geq 0$. \qed
\end{lema}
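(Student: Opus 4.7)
The plan is to prove the statement by induction on $k$, using the binomial expansion. The base case $k=0$ is exactly the hypothesis $x \equiv 1 \pmod{m}$, so there is nothing to check.

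For the inductive step, I would assume $x^{m^k} \equiv 1 \pmod{m^{k+1}}$ and write $x^{m^k} = 1 + a\, m^{k+1}$ for some integer $a$. Raising both sides to the $m$-th power and expanding by the binomial theorem gives
\[
x^{m^{k+1}} = (1 + a\, m^{k+1})^m = 1 + \binom{m}{1} a\, m^{k+1} + \sum_{j=2}^{m} \binom{m}{j} a^j\, m^{j(k+1)}.
\]
The linear term equals $a\, m^{k+2}$, which is already divisible by $m^{k+2}$. For $j \geq 2$ the exponent on $m$ in the $j$-th term is $j(k+1) \geq 2(k+1) = 2k+2 \geq k+2$, so each of these terms is also divisible by $m^{k+2}$. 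Hence $x^{m^{k+1}} \equiv 1 \pmod{m^{k+2}}$, which closes the induction.

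The only potentially subtle point is the inequality $j(k+1) \geq k+2$ for $j \geq 2$ and $k \geq 0$, but this is immediate since $2(k+1) - (k+2) = k \geq 0$. No further tools beyond the binomial expansion are needed, and the hypothesis $m \geq 2$ is used only implicitly to ensure the congruences are nontrivial; the argument works for any integer $m \geq 1$. So this is a routine elementary proof and I do not anticipate any real obstacle.
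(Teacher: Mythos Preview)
Your proof is correct and is precisely the kind of argument the paper has in mind: the lemma is stated there without proof, with only the remark that it ``can be easily shown by elementary number theory and induction on $k$,'' and your binomial-expansion induction is exactly such an argument.
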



\begin{teorema}\label{nqteo}
Let $n \geq 2$ have prime decomposition $n={p_1}^{y_1}\dots {p_r}^{y_r}$, the $p_i$ being pairwise distinct and $y_i>0$. Suppose $r \geq 2$, that is, there are at least two primes involved. Then the nilpotent quotient group $\Gamma_{n,c}=\Gamma_n/\gamma_{c+1}(\Gamma_n)$ does not have property $R_\infty$ for any $c \geq 1$. In other words, the $R_\infty$-nilpotency degree of $\Gamma_n$ is infinite.
\end{teorema}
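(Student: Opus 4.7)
The plan is as follows. By Proposition \ref{nq6} and the observation immediately following it, the theorem will follow once I exhibit, for every $c\geq 1$, a matrix $M=(a_{ij})\in GL_r(\mathbb{Z})$ with $\det(M-Id)\neq 0$ satisfying the $r$ congruences $(M,c,i)$:
$$p_1^{a_{1i}y_1}\cdots p_r^{a_{ri}y_r}\equiv p_i^{y_i}\pmod{m^c},\qquad i=1,\dots,r.$$

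The first step is to replace these $r$ multiplicative conditions by the single stronger, but far more manageable, matrix congruence $M\equiv Id\pmod{m^{c-1}}$. Indeed, by definition of $m=\gcd(p_1^{y_1}-1,\dots,p_r^{y_r}-1)$ each $p_j^{y_j}$ lies in $1+m\mathbb{Z}$, so the lemma preceding the theorem yields $(p_j^{y_j})^{m^{c-1}}\equiv 1\pmod{m^c}$. Writing $a_{ji}=\delta_{ji}+m^{c-1}b_{ji}$, each factor $p_j^{a_{ji}y_j}$ then collapses to $p_j^{\delta_{ji}y_j}\pmod{m^c}$, and the product over $j$ is $p_i^{y_i}$, which is exactly $(M,c,i)$.

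The problem thus reduces to the following purely matrix-theoretic statement: for every $r\geq 2$ and every integer $k=m^{c-1}\geq 1$, construct $M\in GL_r(\mathbb{Z})$ with $M\equiv Id\pmod{k}$ and $\det(M-Id)\neq 0$. This is the main obstacle: writing $M=Id+kN$, the unimodularity condition $\det M=\pm 1$ rules out diagonal $N$ (that would force $k\mid 2$), yet we still need $\det N\neq 0$ so that $1$ does not appear as an eigenvalue of $M$.

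A tridiagonal ansatz resolves this. The $2\times 2$ and $3\times 3$ prototypes
$$M_2=\begin{pmatrix} 1 & k \\ -k & 1-k^2\end{pmatrix},\qquad M_3=\begin{pmatrix} 1 & k & 0 \\ -k & 1-k^2 & k \\ 0 & -k & 1-k^2\end{pmatrix}$$
both have determinant $1$, while $\det(M_2-Id)=k^2$ and $\det(M_3-Id)=-k^4$ are nonzero. For general $r\geq 2$, I take $M$ to be block-diagonal: $r/2$ copies of $M_2$ when $r$ is even, and one $M_3$ block together with $(r-3)/2$ copies of $M_2$ when $r$ is odd. Since determinants factor through the blocks, $M\in GL_r(\mathbb{Z})$ with $\det M=1$ and $\det(M-Id)\neq 0$; moreover $M\equiv Id\pmod{k}$ by inspection. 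Feeding $M$ into Proposition \ref{nq6} then yields an automorphism $\varphi\in Aut(\Gamma_{n,c})$ with $R(\varphi)<\infty$, completing the proof.
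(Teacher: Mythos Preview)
Your proof is correct and follows the same overall strategy as the paper: both reduce via Proposition~\ref{nq6} to exhibiting $M\in GL_r(\mathbb{Z})$ with $M\equiv Id\pmod{m^{c-1}}$ and $\det(M-Id)\neq 0$, and both invoke the same elementary lemma $(x\equiv 1\bmod m\Rightarrow x^{m^{c-1}}\equiv 1\bmod m^c)$ to see that this congruence forces all equations $(M,c,i)$.

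The difference lies only in the explicit matrix. The paper produces, for each $r\geq 2$, a single $r\times r$ matrix $N_r$ (with a somewhat intricate first two rows and an off-diagonal of $1$'s below) and verifies $\det N_r=1$ and $\det(m^kN_r+Id)=1$ by an induction on $r$ via cofactor expansion along the last column. Your block-diagonal construction from the $2\times 2$ and $3\times 3$ prototypes is more elementary and transparent: the determinant conditions factor through the blocks, so no induction is needed and only two small explicit computations are required. What the paper's approach buys is a single uniform formula for all $r$; what yours buys is a shorter, cleaner verification. Both are equally valid realisations of the same reduction.
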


\begin{proof}
Let $m=\gcd(p_1^{y_1}-1,\dots ,p_r^{y_r}-1)$, as we have done in this work. If $m=1$, then none of the groups $\Gamma_{n,c}$ have property $R_\infty$. This is because $\Gamma_{n,c} \simeq \Z^r$ for any $c$ in this case (see Theorem \ref{nq4.5}), and we know $\Z^r$ has not $R_\infty$. So, from now on, suppose $m \geq 2$. Of course $\Gamma_{n,1}$ does not have property $R_\infty$, for it is a finitely generated abelian group. Now, for any fixed $c \geq 2$, we will use Proposition \ref{nq6}, that is, for any $r \geq 2$, we will find a matrix $M=(a_{ij})_{ij} \in Gl_r(\Z)$ with $\det(M-Id) \neq 0$ and satisfying equations $(M,c,i)$ for $1 \leq i \leq r$. We will look for a particular family of matrices $M$, that is,
\[
M=m^kN+Id.
\] Here, $k$ will be some suitable positive number, $N=(j_{\alpha \beta})_{\alpha \beta}$ will be some integer $r \times r$ matrix with determinant $1$ and $m^kN=(m^k j_{\alpha \beta})_{\alpha \beta}$ is the natural scalar product of a number by a matrix. The first thing to observe is that any such matrix $M$ satisfies all the equations $(M,c,i)$ for some big enough $k \geq 1$. Let us see that. It is easy to see that, for such $M$, the equations $(M,c,i)$ become exactly
\[
(p_1^{j_{1i}y_1}p_2^{j_{2i}y_2}\dots p_r^{j_{ri}y_r})^{m^k}\equiv 1\ \ \text{mod}\ m^c.\ \ \ \ \ (M,c,i)
\]

For us to use the previous lemma, the term inside the parenthesis in the above equation must be congruent to $1$ modulo $m$, so we claim this is true. Since $m$ divides each number $p_s^{y_s}-1$ ($1 \leq s \leq r$) by definition, we have $p_s^{y_s}=1\ \text{mod}\ m$, so by the multiplicative property of integer congruence,it is clear that $p_1^{j_{1i}y_1}p_2^{j_{2i}y_2}\dots p_r^{j_{ri}y_r}\equiv 1\ \ \text{mod}\ m$, which shows our claim. Now let $k=c-1$. By the above lemma we have $(p_1^{j_{1i}y_1}p_2^{j_{2i}y_2}...p_r^{j_{ri}y_r})^{m^k}=1\ \text{mod}\ m^c$, so for every $i$, equation $(M,c,i)$ is satisfied for such $M$.

It is then enough for us to find, for any $r \geq 2$, an integer matrix $N$ which makes $det(M)=1$ and $det(M-Id) \neq 0$. Since $M=m^kN+Id$, we have
\[
det(M-Id)=det(m^kN)=m^{rk}det(N),
\]so for $det(M-Id)$ to be non-zero it suffices us to have $det(N) \neq 0$. We claim therefore that, for any $r \geq 2$, there is a matrix $N_r$ such that $det(N_r)=1$ and $det(M_r)=det(m^kN_r+Id)=1$. For any $r\geq 2$, let
\[ N_r=\left[ \begin{array}{ccccccc}
1 & -(m^k+2) & m^k+1 & -(m^k+1) & \dots & (-1)^{r-4}(m^k+1) & (-1)^{r-3}(m^k+1) \\
1 & -(m^k+1) & m^k & -m^k & \dots & (-1)^{r-4}m^k & (-1)^{r-3}m^k \\
0 & 1 & 0 & 0 & \dots & 0 & 0 \\
0 & 0 & 1 & 0 & \dots & 0 & 0 \\
\vdots & \vdots & \vdots & \vdots & \ddots & \vdots & \vdots \\
0 & 0 & 0 & 0 & \dots & 1 & 0 
\end{array} \right].\]

By developing the determinant of $N_r$ using the last column, we get that $\det (N_r)=1$, since the two submatrices that appear are upper triangular with diagonal entries equal 1. Now, our task is to prove that $\det (M_r)=1$, where
\[ M_r=\left[ \begin{array}{ccccccc}
d & -m^k(m^k+2) & m^kd & -m^kd & \dots & (-1)^{r-4}m^kd & (-1)^{r-3}m^kd \\
m^k & -m^kd+1 & m^{2k} & -m^{2k} & \dots & (-1)^{r-4}m^{2k} & (-1)^{r-3}m^{2k} \\
0 & m^k & 1 & 0 & \dots & 0 & 0 \\
0 & 0 & m^k & 1 & \dots & 0 & 0 \\
\vdots & \vdots & \vdots & \vdots & \ddots & \vdots & \vdots \\
0 & 0 & 0 & 0 & \dots & 1 & 0 \\
0 & 0 & 0 & 0 & \dots & m^k & 1
\end{array}\right]\]
with $d=m^k+1$. We will prove this by induction. The case $r=2$ is verified by the calculation of the determinant of
\[ M_2=\left[ \begin{array}{cc}
m^k+1 & -m^k(m^k+2) \\
m^k & -m^k(m^k+1)+1 
\end{array}\right].\]

Now, for $r>2$, developing the determinant of $M_r$ by the last column gives us:
\[ \begin{array}{rl}
\det (M_r) & =(-1)^{r+1}(-1)^{r-3}m^kdm^{k(r-1)}+(-1)^{r+2}(-1)^{r-3}m^{2k}dm^{k(r-2)}+(-1)^{2r}\det (M_{r-1}) \\
& =m^{kr}d-m^{kr}d+1 \\
& =1.
\end{array}\]

This completes the induction step and finishes our proof.
\end{proof}


\end{document}